\newtheorem{theorem}{Theorem}[section]
\newtheorem{conjecture}[theorem]{Conjecture}
\newtheorem{proposition}[theorem]{Proposition}
\newtheorem{corollary}[theorem]{Corollary}
\newtheorem{lemma}[theorem]{Lemma}
\newtheorem{question}[theorem]{Question}
\theoremstyle{definition}
\newtheorem{definition}[theorem]{Definition}
\newtheorem{example}[theorem]{Example}
\newtheorem{remark*}[theorem]{}
\theoremstyle{theorem}
\newtheorem{remark}[theorem]{Remark}
\DeclareMathOperator*{\dprime}{\prime \prime}
\newcommand{\rar}{\rightarrow}
\newcommand{\C}{\mathbb{C}}
\newcommand{\mz}{\mathcal{Z}}
\newcommand{\tr}{\mathrm{tr}}
\newcommand{\Z}{\mathbb Z}
\newcommand{\N}{\mathbb N}
\newcommand{\ot}{\otimes}
\newcommand{\Aut}{\text{Aut}}
\newcommand{\mg}{\mathcal{G}}
\def\scfig #1 #2 {\resizebox{#2}{!}{\includegraphics{#1}}}
\begin{document}
 \title[Bases and regular subfactors]{A few remarks on Pimsner-Popa bases
   and regular subfactors of depth 2}

 \author[K C Bakshi]{Keshab Chandra Bakshi}
 \address{Chennai Mathematical Institute, Chennai, INDIA}
 \email{bakshi209@gmail.com, kcbakshi@cmi.ac.in}

\thanks{The first named author was supported through a DST
  INSPIRE faculty grant (reference  no. DST/INSPIRE/04/2019/002754).}

\author[V P Gupta]{Ved Prakash Gupta} \address{School of Physical
  Sciences, Jawaharlal Nehru University, New Delhi, INDIA}
\email{vedgupta@mail.jnu.ac.in, ved.math@gmail.com}

\dedicatory{In memory of   Vaughan Jones, a true pioneer!}
\maketitle

\begin{abstract}
We prove that a finite index regular inclusion of
    $II_1$-factors with commutative first relative commutant is always
    a crossed product subfactor with respect to a minimal action of a
    biconnected weak Kac algebra. Prior to this, we prove that every
  finite index inclusion of $II_1$-factors which is of depth $2$ and
  has simple first relative commutant (respectively, is regular and
  has commutative or simple first relative commutant) admits a
  two-sided Pimsner-Popa basis (respectively, a unitary orthonormal
  basis).\\
\end{abstract}

 \section{Introduction}

Right from the early days of the evolution of the theory of operator
algebras, the methods of crossed product constructions and fixed point
subalgebras with respect to actions by various algebraic objects on
operator algebras have served extremely well to provide numerous
examples with specific properties as well as to be considered as
suitable candidates for structure results under certain given
hypotheses. One of the first such structure results (thanks to
Ocneanu, Jones, Sutherland, Popa, Kosaki and Hong) states that every
{\it irreducible regular} inclusion of factors of type $II_1$ with
finite Jones index is a {\it group subfactor} of the form $N \subset N
\rtimes G$, with respect to an {\it outer action} of a finite group $G$
on $N$. In particular, every such subfactor has {\it depth} $2$.
Further, it has also been established (in a series of papers by
Ocneanu, David, Szyma\'{n}ski and Nikshych-Vainerman) that every finite
index inclusion of type $II_1$ factors of depth $2$ is of the form $N \subset
N \rtimes H$, with respect to a {\it minimal action} of some {\it
  biconnected weak Hopf $C^*$-algebra} $H$ - see \cite{Ocn, L, Dav,
  Szy, NV1}. More recently, Popa,  Shlyakhtenko and Vaes,
in \cite{PSV}, {among various interesting results}, classified
regular subalgebras $B$ of the hyperfinite $II_1$-factor $R$
with $B'\cap R = \mz(B)$. However, they do not provide any
structure for non-irreducible regular inclusions of factors of type
$II_1$. This short note is a first naive attempt in this direction, in which
we prove the following:\smallskip \color{black}

\noindent {\bf \Cref{mainresult}}{ \it Let $N\subset M$ be a finite
  index regular inclusion of $II_1$-factors with commutative relative
  commutant $N^{\prime}\cap M$. Then, there exists a biconnected weak
  Kac algebra $K$ and  a minimal action  of $K$ on $N$ such that
  $N\subset M$ is isomorphic to $N\subset N\rtimes K$.}\smallskip 

 It must be mentioned here that Ceccherini-Silberstein (in \cite{S})
 claimed to have proved that every finite index regular
 subfactor is a crossed product subfactor with respect to an outer
 action of a finite dimensional Hopf $C^*$-algebra. However, his assertion is incorrect and there is
 an obvious oversight in his proof as is pointed out in \Cref{gap}.\smallskip

\Cref{mainresult} is achieved by first proving that any finite index
regular inclusion of $II_1$-factors with commutative first relative
commutant has depth $2$ and then an appropriate application of
Nikshych-Vainerman's characterization of depth $2$ subfactors yields
the desired structure. In order to take care of the first part, we
utilize the notion of unitary orthonormal basis by Popa to show (in
\Cref{cute}) that any regular subfactor with simple or commutative
relative commutant is of depth at most $2$.  It fits well
to mention here that, in fact, Popa had recently asked 
(in \cite{pop1}) whether every integer index irreducible inclusion of
$II_1$-factors admits a unitary orthonormal basis or not. It seems to
be a difficult question to answer in full generality. In fact, the
question can be asked for non-irreducible inclusions as well, and we
provide a partial answer in:\smallskip

\noindent {\bf \Cref{regularunitary}} {\it Let $N\subset M$ be a
  finite index regular inclusion of factors of type $II_1$. If
  $N^{\prime}\cap M$ is either commutative or simple, then $M$ admits a
  unitary orthonormal basis over $N$.  }\smallskip
 
Then, the second part of \Cref{mainresult} is taken care of by a
suitable application of the notion of two-sided basis for inclusions
of finite von Neumann algebras. In fact, somewhat related to Popa's
question, and equally fundamental in nature, is the question related
to the existence of a two-sided Pimsner-Popa basis for any {\it
  extremal} inclusion of $II_1$-factors, which was asked by Vaughan
Jones around a decade back at various places. This question too has
tasted too little success. In \cite{BG}, we had shown that every
finite index regular inclusion of $II_1$-factors admits a two-sided
Pimsner-Popa basis and we have suitably adopted the idea of
  its proof in proving \Cref{regularunitary}. We move one more step
closer towards answering Jones' question by proving the
following:\smallskip

\noindent{\bf \Cref{two-sided-depth-2}} {\it Let $N\subset M$ be a
  finite index inclusion of type $II_1$-factors of depth $2$ with
  simple relative commutant $N^{\prime}\cap M$. Then, $M_2$ admits a
  two-sided Pimsner-Popa basis over $M_1$.

Furthermore, $M$ also admits a two-sided basis over $N$.  }\smallskip

The flow of the article is in the reverse order in the sense that,
after some preliminaries in Section 2, we first make an attempt to
partially answer Jones' question regarding existence of two-sided
basis in the first half of Section 3 and then move towards Popa's
question regarding existence of unitary orthonormal basis in the
second half of the same section. Finally, in Section 4, we establish
that any regular subfactor with commutative first relative commutant
is given by crossed product by a weak Kac algebra.

 \section{Preliminaries}
Since there are slightly varying (though equivalent) definitions
available in literature, in order to avoid any possible confusion, we
quickly recall the definition that we shall be using here. For further
details, we refer the reader to \cite{BNS,NSW,NV1,NV2} and the
references therein.

\begin{definition}\cite{NSW,BNS}
  \begin{enumerate}
    \item A \textit{weak bialgebra} is a quintuple
      $(A,m,\eta,\Delta,\varepsilon)$ so that $(A,m,\eta)$ is an algebra,
      $(A,\Delta,\varepsilon)$ is a coalgebra and the tuple satisfies the
      following compatibility conditions between algebra and coalgebra
      structures:
  \begin{enumerate}
  \item $\Delta$ is an algebra homomorphism.
  \item
  $\varepsilon(xyz)=\varepsilon(xy_1)\varepsilon(y_2z)$
  and $\varepsilon(xyz)=\varepsilon(xy_2)\varepsilon(y_1z)$  for all $x,y, z\in A$.
  \item ${\Delta}^2(1)=\big(\Delta(1)\otimes 1\big)\big(1\otimes \Delta(1)\big)=\big(1\otimes \Delta(1)\big)\big(\Delta(1)\otimes 1\big).$
 \end{enumerate}

\item  A \textit{weak Hopf algebra} (or a \textit{quantum groupoid}) is
  a weak bialgebra $(A,m,\eta,\Delta,\varepsilon)$ along with a
  $k$-linear map $S:A\rightarrow A$, called an {\it antipode}, satisfying
  the following antipode axioms:
 \begin{enumerate}
  \item $x_1S(x_2)=\varepsilon(1_1x)1_2$,
  \item $S(x_1)x_2=1_1\varepsilon(x1_2)$ and
  \item $S(x_1)x_2 S(x_3)=S(x).$
 \end{enumerate}
\item A weak Hopf algebra $(A,m,\eta,\Delta,\varepsilon)$ is said to be
   a \textit{weak Hopf $C^*$-algebra} if $A$ is a finite dimensional
   $C^*$-algebra and the comultiplication map is $*$-preserving, i.e.,
   $\Delta(x^*)={\Delta(x)}^*.$
\end{enumerate}
  \end{definition}
 As in the preceding definition, throughout this paper, we shall use the
Sweedler's notation, i.e., $\Delta(x) = x_{(1)} \ot x_{(2)}$ and
$(\Delta \otimes \mathrm{Id})\Delta (x) = x_{(1)} \ot x_{(2)} \ot
x_{(3)} = ( \mathrm{Id}\otimes \Delta )\Delta (x)$ for all $x \in
A$.

\begin{definition}\cite{BNS}
 A \textit{weak Kac algebra} is a weak Hopf $C^*$-algebra
 $(A,m,\eta,\Delta,\varepsilon, S)$ such that $S^2=\mathrm{Id}_A$ and $S$
 is $*$-preserving.
\end{definition}
\begin{remark}
  \begin{enumerate}
    \item A weak Hopf algebra is a Hopf algebra if and only if the
 comultiplication is unit-preserving if and only if the counit is a
 homomorphism of algebras.

 In particular, every Kac algebra is a weak Kac algebra.
\item The dual of a weak Kac algebra also admits a canonical weak Kac algebra.
  \end{enumerate}
  \end{remark}

\begin{example} Given a finite groupoid $\mg$, 
the associated {groupoid algebra} $\C[\mg]$ inherits a canonical weak
Kac algebra structure with respect to the comultiplication $\Delta$,
the counit $\varepsilon$ and the antipode $S$ satisfying
\[
\Delta(g) = g \ot g, \varepsilon(g) = 1, S(g) = g^{-1}\ \text{ for}\  g \in \mg.
\] 
It is easily seen that $\C[\mg]$ (resp., $\C[\mg]^*$) is a
cocommutative (resp., commutative) weak Kac algebra. And, conversely,
it was proved by Yamanouchi that for every cocommutative weak Kac
algebra $H$ there exists a finite groupoid $\mg$ such that $H$ is
isomorphic to $\C[\mg]$.
\end{example}

 Given any weak Kac algebra $A$, the target (resp., source) counital
 map $\varepsilon^t$ (resp., $\varepsilon^s$) on $A$, is given by
 $\varepsilon^t(x)=\varepsilon(1_{(1)}x)1_{(2)}$ \big(resp.,
 $\varepsilon^s(x)=1_{(1)}\varepsilon(x1_{(2)})\big)$ for $x\in A$,
 where $\Delta(1) = 1_{(1)}\ot 1_{(2)}$ in Sweedler's notation.  These
 maps are idempotent, i.e., $\varepsilon^t \circ \varepsilon^t =
 \varepsilon^t$, $\varepsilon^s \circ \varepsilon^s = \varepsilon^s$,
 and their images are unital $C^*$-subalgebras (called the {\it
   Cartan subalgebras}) of $A$:
\[
 A_t:=\{x\in A: \varepsilon^t(x)=x\}\ \text{ and} \
 A_s:=\{x\in A: \varepsilon^s(x)=x\}.
 \]
$A$ is said to be connected if the
  inclusion $A_t\subset A$ is connected (see \cite{GHJ} for
  definition). And, $A$ is said to be \textit{biconnected} if both $A$
  and its dual are connected.

  \begin{remark}

 Given a finite groupoid $\mg$, the  groupoid algebra $\C[\mg]$ is biconnected if
  and only if $\mg$ is a group. 
    \end{remark}
\color{black}

\color{black}
\subsection{Crossed product construction}\( \)

We now briefly recall the notion of the crossed product construction via an action of 
a weak Hopf $C^*$-algebra, as in \cite{NSW} (also see \cite{NV2, NV1}).

\begin{definition}
  \begin{enumerate}
    \item By a (left) action of a weak Hopf $C^*$-algebra $A$ on a von
      Neumann algebra $M$, we mean a  linear
      map
      \[
      A\otimes M\ni a\otimes x \mapsto (a\vartriangleright x)\in
      M
      \]
      which defines a (left) module structure on $M$
      and satisfies the conditions
 \begin{enumerate}
  \item $a\vartriangleright xy=(a_{(1)}\vartriangleright x)(a_{(2)}\vartriangleright y)$,
  \item $(a\vartriangleright x)^*=S(a)^*\vartriangleright x^*,$ and
  \item $a\vartriangleright 1=\varepsilon^t(a)\vartriangleright 1$ and $a \vartriangleright 1=0$ iff $\varepsilon^t(a)=0$
  \end{enumerate}
 for $a \in A$, $x, y \in M$.
 
\item  Under such a (left) action, the \textit{crossed product algebra}
  $M\rtimes A$ is defined as follows:

  As a $\C$-vector space it is the relative tensor product
  $M\otimes_{A_t} A$, where $A$ (resp., $M$) admits a canonical left
  (resp., right) $A_t$-module structure so that \( x
  (z\vartriangleright 1)\otimes a \sim  x \otimes za, \) for all
  $x\in M, a\in A,z\in A_t.$ For each $(a, x) \in A \times M$,
  $[x\otimes a]$ denotes the class of the element $x\otimes a$ and a
  natural $*$-algebra structure on $M\otimes_{A_t} A$ is given by:
  \[
    [x\otimes a][y\otimes b]=[x(a_{(1)}\vartriangleright y)\otimes a_{(2)}b],
    \]
    \[
      [x\otimes a]^*=[(a_{(1)}^*\vartriangleright x^*)\otimes
        a_{(2)}^*],\] for all $x,y\in M$ and $a,b\in A$.

\item The action is said to be \textit{minimal} if $A^{\prime}\cap (M\rtimes
  A)=A_s.$
  \end{enumerate}
\end{definition}
  \begin{remark}\cite{NSW, NV1,NV2}
    \begin{enumerate}
\item $M\rtimes A$ can be realized as a von Neumann
      algebra.
 \item If $M$ is a $II_1$-factor and $A$ is a weak Hopf
$C^*$-algebra acting minimally on $M$, then $M \rtimes A$ is also a
$II_1$-factor.
\end{enumerate}
  \end{remark}
  \color{black}
  
Our interest in actions of weak Hopf $C^*$-algebras stems from the
following beautiful characterization of depth $2$ subfactors by
Nikshych and Vainerman.  Before stating them, it would be appropriate
to recall the following definition.
\begin{definition}\label{depth-defn}
 Consider a finite index inclusion $N\subset M$ of $II_1$-factors and
 suppose $N\subset M\subset M_1\subset \cdots \subset M_k\subset
 \cdots $ is its tower of Jones' basic construction. Then, the
 inclusion $N\subset M$ is said to have \textit{finite depth} if there
 exists a $k$ such that $N^{\prime}\cap M_{k-2}\subset N^{\prime}\cap
 M_{k-1}\subset N^{\prime}\cap M_{k}$ is an instance of basic
 construction.  The least such $k$ is defined as the \textit{depth} of
 the inclusion.
 \end{definition}\color{black}
We urge the reader to see \cite{GHJ} for various other equivalent
formulations of the notion of depth.\smallskip

For any finite index irreducible inclusion $N\subset M$ of $II_1$-factors, i.e.,
$N^{\prime}\cap M=\C$, it was announced by Ocneanu (in \cite{Ocn}) and
proved later, separately, by Szyma\'{n}ski, David and Longo  - see \cite{Szy,
  Dav, L} - that if $N \subset M$ is of depth $2$, then there exists a
 Kac algebra $K$ and a minimal action of $K$ on
$M_1$ such that $M_2\cong M_1\rtimes K$  and $M = M_1^H$. More generally,
Nikshych and Vainerman obtained the following characterization:
\begin{theorem}\cite{NV1, BNS}
 A finite index inclusion $N\subset M$ of $II_1$-factors is of depth
 $2$ if and only if there exists a biconnected weak Hopf
 $C^*$-algebra $H$ and a minimal action of $H$ on $M_1$ such that
 $M_2\cong M_1\rtimes H$ and
 $M = M_1^H$.
\end{theorem}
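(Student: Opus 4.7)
The plan is to establish both implications of this biconditional separately, with the forward implication being the genuinely hard one.

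For the \emph{if} direction, assume $H$ is a biconnected weak Hopf $C^*$-algebra acting minimally on $M_1$ with $M_2 \cong M_1 \rtimes H$ and $M = M_1^H$. The strategy is to compute the tower of higher relative commutants $N' \cap M_k$ in terms of $H$ and its dual $\widehat{H}$. The minimality hypothesis pins down $M' \cap M_2 = H_s$, while a Galois-type analysis identifies $N' \cap M_2$ with $H$. Once these commutants are matched, the depth~$2$ condition --- that $N' \cap M \subset N' \cap M_1 \subset N' \cap M_2$ is an instance of basic construction --- follows by matching traces and verifying the appropriate Markov relation inside $H$ and $\widehat{H}$.

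For the \emph{only if} direction, I would take $H := N' \cap M_2$ as the candidate and equip it with the structural maps of a weak Hopf $C^*$-algebra inherited from the Jones tower. The depth~$2$ hypothesis says precisely that $N' \cap M \subset N' \cap M_1 \subset N' \cap M_2$ is an instance of basic construction, which endows $H$ with a natural $H_t$-bimodule structure where $H_t := N' \cap M_1$. A comultiplication $\Delta \colon H \to H \otimes_{H_t} H$ is then defined by an Ocneanu--Szyma\'{n}ski type formula in terms of a Pimsner--Popa basis of $M_1$ over $M$; the counit $\varepsilon$ is given by the trace-preserving conditional expectation onto the Cartan subalgebras; and the antipode arises from the canonical anti-automorphism $S(x) = Jx^*J$ of the basic construction on $L^2(M_1)$. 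The action $\vartriangleright \colon H \otimes M_1 \to M_1$ can be defined using the Jones projection $e_2$ by a formula of the form $a \vartriangleright x = [M:N]\, E_{M_1}(a x e_2)$, and its minimality is checked directly from $N' \cap M_2 = H$.

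The main obstacles are twofold. First, verifying the weak Hopf $C^*$-algebra axioms for $(H, \Delta, \varepsilon, S)$ --- especially the weak counit compatibility $\varepsilon(xyz) = \varepsilon(xy_{(1)})\varepsilon(y_{(2)}z)$ and the three antipode identities --- reduces to a delicate manipulation of the exchange relations $e_i e_{i \pm 1} e_i = [M:N]^{-1} e_i$ together with the Markov property of the trace along the Jones tower; this is the technical heart of the argument. Second, one must check that $M_1 \rtimes H \cong M_2$ with $M_1^H = M$, and that $H$ is biconnected. Biconnectedness will follow from the connectedness of $H_t \subset H$ (a consequence of $N$ and $M_1$ being factors) together with the parallel argument for the dual inclusion, using that $M_2$ is also a factor.
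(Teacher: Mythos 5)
This statement is quoted in the paper as an external result of Nikshych--Vainerman (and B\"ohm--Nill--Szlach\'anyi); the paper supplies no proof of it, so there is no internal argument to compare yours against. Judged on its own terms, your sketch reproduces the broad shape of the Ocneanu--Szyma\'nski--David--Nikshych--Vainerman strategy, but it contains a concrete error and otherwise defers every substantive verification.

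The error is the choice of candidate algebra in the ``only if'' direction. The weak Hopf $C^*$-algebra acting on $M_1$ with $M_2 \cong M_1 \rtimes H$ and $M = M_1^H$ is $H := M' \cap M_2$, with dual $\widehat{H} \cong N' \cap M_1$ and Cartan subalgebras sitting inside $M' \cap M_1$ and $M_1' \cap M_2$ (each anti-isomorphic to $N' \cap M$). Your choice $H := N' \cap M_2$ is a different, strictly larger object: by depth $2$ it is the basic construction of $N' \cap M \subset N' \cap M_1$, i.e.\ the Heisenberg double $\widehat{H} \rtimes H$. Already in the irreducible case ($N' \cap M = \C$) one has $N' \cap M_2 \cong M_n(\C)$ with $n = [M:N]$, of dimension $n^2$, whereas the Kac algebra realizing $M_2 = M_1 \rtimes H$ must have dimension $[M_2 : M_1] = n$; so your candidate cannot carry the required structure. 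Relatedly, $H_t := N' \cap M_1$ is not a Cartan subalgebra of anything here --- it is the entire dual weak Hopf algebra --- and the comultiplication of a weak Hopf $C^*$-algebra in the sense used in this paper takes values in $H \otimes H$ (with $\Delta(1) \neq 1 \otimes 1$ in general), not in the relative tensor product $H \otimes_{H_t} H$; the latter is the bialgebroid formulation. Finally, the passages you yourself flag as ``the technical heart'' --- the weak bialgebra compatibility axioms, the antipode identities, the identification $M_1 \rtimes H \cong M_2$, minimality, and biconnectedness --- are exactly where the content of the theorem lies, and the sketch does not carry any of them out. As written, this is an outline of the wrong construction rather than a proof of the right one.
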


\color{black}

\section{Pimsner-Popa Bases}
Let $\mathcal{N}\subset \mathcal{M}$ be a unital inclusion of von
Neumann algebras equipped with a faithful normal conditional
expectation $\mathcal{E}$ from $\mathcal{M}$ onto $\mathcal{N}$. Then,
a finite set $\mathcal{B}:=\{\lambda_1,\ldots,\lambda_n\}\subset
\mathcal{M}$ is called a {\it left} (resp., {\it right) Pimsner-Popa
  basis} for $\mathcal{M}$ over $\mathcal{N}$ via $\mathcal{E}$ if
every $x\in \mathcal{M}$ can be expressed as $x=\sum_{i=1}^n
\mathcal{E}(x\lambda^*_i)\lambda_i$ (resp.,
$x=\sum_{j=1}^n\lambda_j\mathcal{E}(\lambda^*_jx$). Further, such a basis $\{\lambda_i\}$
is said to be {\it orthonormal} if $\mathcal{E}(\lambda_i \lambda_j^*)
= \delta_{i,j}$ for all $i, j$. And, a collection $\mathcal{B}$
is said to be a {\it two-sided basis} if it is simultaneously a left
and a right Pimsner-Popa basis.

In this article, when we do not use the adjectives left or
  right, by a basis we shall always mean a right Pimsner-Popa basis
  (and not a two-sided basis).

\subsection{Two-sided basis} \( \)

About a decade back, Vaughan Jones asked the following question at
various places\footnote{For instance, during the second talk by
M. Izumi in the workshop organized in honour of V S Sunder's 60th
birthday at IMSc, Chennai during March-April 2012.}.
\begin{question}(Vaughan Jones) \label{Jones}
 Let $N$ be a $II_1$-factor and $N\subset M$ be an extremal subfactor of
finite index. Then, does there always exist a two-sided Pimsner-Popa basis for
 $M$ over $N$?
\end{question}
      \begin{example}
  Given a finite group $G$ and a subgroup $H$, by Hall's Marriage
  Theorem, we can obtain a set of coset representatives which acts
  simultaneously as representatives of left and right
  cosets\footnote{
  \url{https://mathoverflow.net/questions/6647/do-subgroups-have-two-sided-bases.}}
  of $H$ in $G$. Therefore, if $G$ acts outerly on a $II_1$-factor
  $N$, then $N\rtimes G$ always possesses a two-sided unitary
  orthonormal basis over $N\rtimes H$.

  This observation, therefore, allows us to
  think about the existence of a two-sided basis as a subfactor
  analogue of Hall's Marriage Theorem.
        \end{example}

\begin{definition}    
An inclusion $\mathcal{Q} \subset \mathcal{P}$ of von Neumann algebras
is said to be regular if its group of normalizers
$\mathcal{N}_{\mathcal{P}}(\mathcal{Q}):=\{u\in
\mathcal{U}(\mathcal{P}):u\mathcal{Q}u^*=\mathcal{Q}\}$ generates
$\mathcal{P}$ as von Neumann algebra, i.e.,
$\mathcal{N}_{\mathcal{P}}(\mathcal{Q})'' = \mathcal{P}$.
\end{definition}

\begin{remark}
To the best of our knowledge, till date,  too little
progress has been made in answering \Cref{Jones}. 

\begin{enumerate}
    \item If $N\subset M$ is a regular irreducible subfactor of type
      $II_1$ of finite index, then (from some works of Ocneanu, Jones,
      Sutherland, Popa, Kosaki ) it is a well-known fact that
      it is isomorphic to $N\subset N\rtimes G$, for some outer action
      of a finite group $G$ on $N$ - see \cite{H}, for a precise
      statement. In particular, $M$ has a two-sided basis over $N$.

    \item In \cite{BG}, we could drop the irreducibility condition and
      showed, without depending upon any structure result, that every
      finite index regular subfactor $N \subset M$ of type $II_1$
      admits a two-sided basis. A little thought should
        convince the reader that the two-sided basis we constructed in
        \cite{BG} is in fact orthonormal.
\end{enumerate}
\end{remark}
A comment pertaining to an application of the notion of two-sided basis
fits in well here:
\begin{remark}
 It is a known fact to the experts that any regular subfactor of type
 $II_1$ has integer index - see \cite[Page 150]{GHJ}. However, there
 was no explicit proof easily accessible in literature until
 Ceccherini-Silberstein \cite{S} suggested having one. Though, the
   argument provided in \cite[Theorem 4.5]{S} seems incomplete as is
   indicated in \Cref{silmistake}. \smallskip

 To our satisfaction, we could do a little better (in \cite{BG}) by
 exhibiting that, for any finite index regular subfactor $N \subset M$
 of type $II_1$, its index is given explicitly by
 \begin{equation}\label{dimn-formula}
[M:N] =|G|\, \mathrm{dim}(N'\cap M),
\end{equation}
where $G$ denotes the generalized Weyl group of the inclusion $N
\subset M$, which is defined as the quotient group
$\frac{\mathcal{N}_M(N)}{\mathcal{U}(N) \mathcal{U}(N'\cap
  M)}$.\smallskip
\end{remark}
 Depending upon the structure result of irreducible depth
$2$ subfactors by Szyma\'{n}ski and a result by Kac which determines
when a Kac algebra is a group algebra, Nikshych and Vainermann deduced
(in \cite[Corollary 4.19]{NV1}) that a depth $2$ subfactor of type
$II_1$ with prime index $p$ is necessarily a group subfactor with
respect to an outer action of the cyclic group
$\Z/p\Z$. Interestingly, it turns out that the formula in
\Cref{dimn-formula} has the following analogous consequence.
\begin{proposition}
Let $N \subset M$ be a finite index regular inclusion of
$II_1$-factors. If $[M:N]=p$ is prime, then $N\subset M$ is irreducible.

In particular, the
cyclic group $G:=\Z/p\Z$ acts outerly on $N$ and $N \subset M$ is
isomorphic to $N \subset N \rtimes G$.
\end{proposition}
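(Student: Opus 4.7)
The plan is to exploit the index formula $[M:N]=|G|\cdot \dim(N'\cap M)$ recalled in equation \eqref{dimn-formula}, where $G = \mathcal{N}_M(N)/\big(\mathcal{U}(N)\mathcal{U}(N'\cap M)\big)$ is the generalized Weyl group of the inclusion. Since $[M:N] = p$ is prime and both factors on the right are positive integers, only two possibilities arise: either $|G| = p$ and $\dim(N'\cap M) = 1$, or $|G| = 1$ and $\dim(N'\cap M) = p$. The bulk of the work is to rule out the second possibility; once this is done, the inclusion is irreducible and I only need to invoke the classical structure theorem for irreducible regular subfactors.

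So, assume for contradiction that $|G| = 1$ and $\dim(N'\cap M) = p$. By the definition of $G$, this means $\mathcal{N}_M(N) \subseteq \mathcal{U}(N)\cdot \mathcal{U}(N'\cap M)$. By regularity, $M = \mathcal{N}_M(N)'' \subseteq \big(N \cdot (N'\cap M)\big)'' = N\vee (N'\cap M)$, so in fact $M = N\vee (N'\cap M)$. I next argue that $N'\cap M$ is a factor: its center commutes with $N'\cap M$ by definition and with $N$ because $N'\cap M$ does, so it commutes with the join $N\vee (N'\cap M) = M$; since $M$ is a factor, $\mathcal{Z}(N'\cap M) = \C$. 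Hence $N'\cap M$ is a finite-dimensional factor, say $N'\cap M \cong M_k(\C)$ with $k^2 = p$.

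Because $N$ is a $II_1$-factor and $N'\cap M \cong M_k(\C)$ commutes with $N$ inside $M$ and together they generate $M$, a standard matrix-unit argument gives $M \cong N \otimes M_k(\C)$: choosing a system of matrix units $\{e_{ij}\}$ in $N'\cap M$, every $m\in M$ is a finite sum $\sum_{i,j} n_{ij} e_{ij}$ with $n_{ij}\in N$, so $e_{11}Me_{11} = Ne_{11} \cong N$ and $M$ splits as the asserted tensor product. Consequently $[M:N] = k^2$, and the equality $k^2 = p$ contradicts the primality of $p$. This rules out the second case.

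Therefore $|G| = p$ and $N'\cap M = \C$, i.e., $N\subset M$ is irreducible. Applying the classical structure theorem for finite index irreducible regular subfactors of $II_1$-factors (Ocneanu, Jones, Sutherland, Popa, Kosaki; see the first remark above), there is a finite group $H$ acting outerly on $N$ with $N\subset M \cong N\subset N\rtimes H$, and necessarily $|H| = [M:N] = p$. The only group of prime order $p$ is the cyclic group $\Z/p\Z$, so $H \cong \Z/p\Z$, which completes the proof. The only real obstacle is the tensor-product decomposition of $M$ in the excluded case, but this reduces to a short manipulation with matrix units in $N'\cap M$ once one knows that $N'\cap M$ is a factor.
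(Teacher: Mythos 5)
Your proof is correct, and although it opens exactly as the paper's does --- feeding the primality of $p$ into the index formula $[M:N]=|G|\,\mathrm{dim}(N'\cap M)$ of \Cref{dimn-formula} and reducing to the case $|G|=1$, $\mathrm{dim}(N'\cap M)=p$ --- you dispose of that case by a genuinely different and more elementary route. The paper notes that $[M:N]=\mathrm{dim}_{\C}(N'\cap M)=\|\Lambda\|^2$ for the inclusion matrix $\Lambda$ of $\C\subset N'\cap M$ and invokes \cite[Theorem 4.6.3]{GHJ} to conclude that $\C\subset N'\cap M\subset N'\cap M_1$ is a basic construction, whence $N'\cap M\cong M_n(\C)$ and $[M:N]=n^2$ is not prime. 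You instead exploit $|G|=1$ directly: $\mathcal{N}_M(N)=\mathcal{U}(N)\,\mathcal{U}(N'\cap M)$ together with regularity gives $M=N\vee(N'\cap M)$, so the center of $N'\cap M$, commuting with both $N$ and $N'\cap M$, lies in $\mathcal{Z}(M)=\C$; hence $N'\cap M\cong M_k(\C)$ and $k^2=p$ is already absurd (the subsequent matrix-unit splitting $M\cong N\otimes M_k(\C)$ is correct but not needed for the contradiction). Your argument avoids the derived-tower machinery of \cite{GHJ} altogether and, as a bonus, exhibits the excluded case as a depth-one inclusion $N\subset N\otimes M_k(\C)$, whereas the paper's version is shorter on the page but leans on the nontrivial fact that the basic-construction condition at the first stage of the derived tower forces $N'\cap M$ to be simple. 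The concluding appeal to the Ocneanu--Jones--Sutherland--Popa--Kosaki structure theorem, together with the observation that the unique group of order $p$ is $\Z/p\Z$, is the same in both treatments.
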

\begin{proof}
  Suppose, on contrary, that $N\subset M$ is not irreducible. Then,
  from \Cref{dimn-formula}, it follows that
  \[
[M:N]= \mathrm{dim}_{\C}(N'\cap M).
  \]
Note that, if $\Lambda$ denotes the inclusion matrix of the inclusion
$\C \subset N'\cap M$, then $\|\Lambda\|^2 = \mathrm{dim}_{\C}(N'\cap
M)$. In particular, $\|\Lambda\|^2 = [M:N]$, which then implies that
$\C \subset N'\cap M \subset N'\cap M_1$ is an instance of basic
construction - see \cite[Theorem
  4.6.3 (vii)]{GHJ}. Thus, $N'\cap M \cong M_n(\C)$ for some $n \geq
2$; so that $[M:N]= n^2$. This contradicts the hypothesis that $[M:N]$
is a prime number. Hence, $N \subset M$ must be irreducible.

The asserted structure of $N \subset M$ is then well-known.\end{proof}

 Further, employing appropriate two-sided bases for the
inclusions $N \subset N \vee (N'\cap M)$ and $N \vee (N'\cap M)
\subset M$, the following useful observation was proved explicitly in
the first two paragraphs of the proof of \cite[Theorem 3.12]{BG}. We
will be using it crucially in the proof of \Cref{mainresult} and shall not
repeat the details here.  

\begin{proposition}\cite{BG}\label{scalar-index}
Let $N \subset M$ be a finite index regular inclusion of $II_1$-factors. Then, 
 the Watatani index of the restriction of $\tr_M$ to $N'\cap M$ is a scalar.
\end{proposition}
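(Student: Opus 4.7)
The plan is to extract the statement from a transitivity argument together with a single Watatani-index computation. Writing $Z := N'\cap M \cong \bigoplus_k M_{n_k}(\C)$ with minimal central projections $\{p_k\}$, matrix units $\{e_{ij}^{(k)}\}$, and weights $t_k := \tr_M(e_{ii}^{(k)})$, a direct computation (for example, with the quasi-basis $\{(e_{ij}^{(k)}/t_k,\, e_{ji}^{(k)})\}_{k,i,j}$) shows that the Watatani index of $\tr_M|_Z : Z \to \C$ is
\[
\mathrm{Ind}\big(\tr_M|_Z\big) \;=\; \sum_k \frac{n_k}{t_k}\, p_k.
\]
Hence this index is a scalar in $Z$ if and only if the ratio $n_k/t_k$ is independent of $k$, and this is the condition I would try to extract from regularity.

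Next, I would use regularity to force exactly this condition. For each $u \in \mathcal{N}_M(N)$, we have $uZu^* = (uNu^*)' \cap M = Z$, so $\mathrm{Ad}(u)$ is a $*$-automorphism of $Z$ and permutes the set $\{p_k\}$. If $O$ is an orbit of this action, the projection $q_O := \sum_{p_k \in O} p_k \in Z \subseteq M$ commutes with every $u \in \mathcal{N}_M(N)$; since $\mathcal{N}_M(N)'' = M$ by regularity, $q_O$ belongs to $\mathcal{N}_M(N)' \cap M = \mathcal{Z}(M) = \C$, so $q_O \in \{0,1\}$. Therefore the action on $\{p_k\}$ is transitive. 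Because each $\mathrm{Ad}(u)$ is a trace-preserving $*$-isomorphism of $Z$, transitivity forces all block sizes $n_k$ to agree with a common value $n$ and all $\tr_M(p_k)$ to agree; hence $t_k = \tr_M(p_k)/n_k$ is independent of $k$, the ratio $n_k/t_k$ is constant, and the claimed scalarity follows.

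One could alternatively follow the route indicated by the authors, which builds a two-sided orthonormal Pimsner-Popa basis of $P := N \vee Z$ over $N$ from the rescaled matrix units $\{e_{ij}^{(k)}/\sqrt{t_k}\}$ and then a two-sided basis of $M$ over $P$ (using that $\mathcal{N}_M(N) \subseteq \mathcal{N}_M(P)$ renders $P \subset M$ itself regular), concatenates them along the tower $N \subset P \subset M$, and compares Watatani indices. The main obstacle in that route is the bookkeeping required to verify that the concatenated basis is simultaneously left and right Pimsner-Popa for $N \subset M$ and to track how the weights $t_k$ appear in the combined index; the transitivity approach above sidesteps this by exploiting $\mathcal{Z}(M) = \C$ directly.
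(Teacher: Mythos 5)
Your proof is correct, and it takes a genuinely different route from the paper's. The paper does not reprove the statement at all: it simply cites the first two paragraphs of the proof of Theorem 3.12 in \cite{BG}, where the scalarity of the Watatani index is extracted from an explicit construction of two-sided Pimsner--Popa bases for $N \subset N\vee(N'\cap M)$ and $N\vee(N'\cap M)\subset M$ --- precisely the route you describe, and rightly flag as bookkeeping-heavy, in your final paragraph. Your argument instead combines the standard quasi-basis computation $\mathrm{Ind}({\tr_M}|_{N'\cap M})=\sum_k (n_k/t_k)\,p_k$ (cf.\ \cite[Corollary 2.4.3]{W}) with the observation that, by regularity, $\mathrm{Ad}\big(\mathcal{N}_M(N)\big)$ acts transitively on the minimal central projections of $N'\cap M$: the sum of the projections in any orbit is a projection in $\mathcal{N}_M(N)'\cap M=\mathcal{Z}(M)=\C 1$, and trace-preservation of inner automorphisms then equalizes all the block sizes $n_k$ and all the weights $t_k$, so the ratio $n_k/t_k$ is constant. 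This is more elementary and entirely self-contained, and it isolates exactly where regularity and factoriality of $M$ enter. What the route via \cite{BG} buys in exchange is the two-sided bases themselves, which are needed independently for the index formula $[M:N]=|G|\,\mathrm{dim}(N'\cap M)$ and reappear in the proofs of \Cref{regularunitary} and \Cref{mainresult}; your argument yields only the scalarity, which is all this proposition asks for.
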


Adding to the list, we shall provide, in the next
section, an yet another application of the notion of two-sided basis
for regular inclusions.\smallskip

\subsubsection{One more step towards Jones' question}\( \)

Note that, any irreducible regular factorial inclusion of type $II_1$, being
isomorphic to a crossed product subfactor by a group, must be of depth 2 (see \cite{GHJ} or 
\Cref{depth-defn} for definition).  Thus, it is natural to
ask the following question:

 \begin{question}
  Let $N\subset M$ be a depth $2$ subfactor of type $II_1$ of finite
  index. Then, does $M/N$ always have a two-sided basis?
 \end{question}

We do not know the answer yet in this generality. However, we provide
a partial answer in \Cref{two-sided-depth-2}, for which we require
some preparation.

First, we need (a mild generalization of) a useful result of Popa
\cite[$\S\,$1.1.5]{pop2}. Popa had proved it for any (left or right)
orthonormal basis and it is easy to see that it holds for any (left or
right) Pimsner-Popa basis as well. Recall that a commuting square $(D,
C, B, A)$ of von Neumann algebras is said to be non-degenerate if
\[
\overline{\mathrm{span}[CB]}^{\mathrm{S.O.T.}} = A = \overline{\mathrm{span}[BC]}^{\mathrm{S.O.T.}}.
\]

\begin{lemma}(Popa)\label{popacommsqrs}
 Let $\mathcal{M}$ be a finite von Neumann algebra with a faithful
 normal tracial state and
 $(\mathcal{N},\mathcal{K},\mathcal{L},\mathcal{M})$ be a
 non-degenerate commuting square of von Neumann subalgebras of
 $\mathcal{M}$. Then, any right basis for $\mathcal{K}/\mathcal{N}$ is also a
 right basis for $\mathcal{M}/\mathcal{L}.$
\end{lemma}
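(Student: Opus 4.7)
The plan is to verify the candidate basis identity on an SOT-dense $*$-subalgebra of $\mathcal{M}$ and then extend to all of $\mathcal{M}$ by $L^2$-continuity. Fix a right Pimsner--Popa basis $\{\lambda_1,\dots,\lambda_n\}\subset\mathcal{K}$ for $\mathcal{K}/\mathcal{N}$, so that $y=\sum_i \lambda_i E_{\mathcal{N}}(\lambda_i^* y)$ for every $y\in\mathcal{K}$, where $E_{\mathcal{N}}$ and $E_{\mathcal{L}}$ denote the trace-preserving conditional expectations onto $\mathcal{N}$ and $\mathcal{L}$ respectively. The goal is to establish $x = \sum_i \lambda_i E_{\mathcal{L}}(\lambda_i^* x)$ for all $x\in\mathcal{M}$.

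The first step is to check this identity on elementary products $x=yz$ with $y\in\mathcal{K}$ and $z\in\mathcal{L}$. The $\mathcal{L}$-bimodule property of $E_{\mathcal{L}}$ yields $E_{\mathcal{L}}(\lambda_i^* y z) = E_{\mathcal{L}}(\lambda_i^* y)\,z$, and since $\lambda_i^* y\in\mathcal{K}$ the commuting square condition, which I will record in the form $E_{\mathcal{L}}\big|_{\mathcal{K}} = E_{\mathcal{N}}\big|_{\mathcal{K}}$, rewrites this as $E_{\mathcal{N}}(\lambda_i^* y)\,z$. Summing over $i$ and invoking the basis property of $\{\lambda_i\}$ applied to $y\in\mathcal{K}$ gives $\sum_i \lambda_i E_{\mathcal{L}}(\lambda_i^* yz) = \bigl(\sum_i \lambda_i E_{\mathcal{N}}(\lambda_i^* y)\bigr) z = yz$. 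By linearity the identity holds on $\mathrm{span}[\mathcal{K}\mathcal{L}]$, which by the non-degeneracy hypothesis is SOT-dense in $\mathcal{M}$, hence $L^2$-dense.

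For the extension, consider the operator $T(\xi) := \sum_{i=1}^n \lambda_i E_{\mathcal{L}}(\lambda_i^* \xi)$ on $L^2(\mathcal{M},\tr)$. Since the basis is finite and each summand factors through left multiplication by $\lambda_i^*$, the Jones projection $e_{\mathcal{L}}$, and left multiplication by $\lambda_i$, the map $T$ is a finite sum of bounded operators on $L^2(\mathcal{M})$, hence bounded. It agrees with the identity on the $L^2$-dense subspace $\mathrm{span}[\mathcal{K}\mathcal{L}]$ by the previous paragraph, so $T=\mathrm{id}$ on $L^2(\mathcal{M})$, and restricting back to $\mathcal{M}$ delivers the desired basis formula.

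The only genuine obstacle is the passage from $\mathrm{span}[\mathcal{K}\mathcal{L}]$ to all of $\mathcal{M}$; once the commuting-square identity $E_{\mathcal{L}}\big|_{\mathcal{K}} = E_{\mathcal{N}}\big|_{\mathcal{K}}$ is in hand, the algebraic verification on $\mathcal{K}\mathcal{L}$ is essentially automatic, non-degeneracy supplies the needed density, and the finiteness of the Pimsner--Popa basis makes the $L^2$-boundedness of $T$ immediate.
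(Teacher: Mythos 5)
Your argument is correct and is essentially the paper's own proof: both verify the identity $\sum_i \lambda_i E_{\mathcal{L}}(\lambda_i^*\,\cdot\,) = \mathrm{id}$ on products $yz$ with $y\in\mathcal{K}$, $z\in\mathcal{L}$ using the $\mathcal{L}$-bimodularity of $E_{\mathcal{L}}$ together with the commuting-square identity $E_{\mathcal{L}}|_{\mathcal{K}}=E_{\mathcal{N}}|_{\mathcal{K}}$, and then extend by $L^2$-density via non-degeneracy, the paper phrasing the bounded extension directly as the operator identity $\sum_i\lambda_i e^{\mathcal{M}}_{\mathcal{L}}\lambda_i^*=1$ on $L^2(\mathcal{M})$. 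No gaps.
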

\begin{proof}
 Suppose $\{\lambda_i:i\in I\}$ is a right basis for
 $\mathcal{K}/\mathcal{N}$. Then, $\sum_i\lambda_i
 e^{\mathcal{K}}_{\mathcal{N}}\lambda^*_i=1$, where
 $e^{\mathcal{K}}_{\mathcal{N}}$ denotes the Jones projection corresponding
 to the inclusion $\mathcal{N}\subset \mathcal{K}$. Let $\Omega$ denote the canonical cyclic vector for $L^2(\mathcal{M})$. Then, for any $x\in
 \mathcal{L}$ and $y\in \mathcal{K}$, we have
 \begin{align*}
 \sum_i \lambda_i e^{\mathcal{M}}_{\mathcal{L}} \lambda^*_i(yx\Omega)
 &= \sum_i \lambda_i E^{\mathcal{M}}_{\mathcal{L}}(\lambda^*_iyx)\Omega\\
 &= \sum_i \lambda_i E^{\mathcal{M}}_{\mathcal{L}}(\lambda^*_iy)x\Omega\\
 &= \sum_i \lambda_i E^{\mathcal{K}}_{\mathcal{N}}(\lambda^*_iy)x\Omega~~~~~~~~~~~~~~~\qquad \text{[by commuting square condition]}\\
 &= yx\Omega.
 \end{align*}
As the commuting square is non-degenerate, we have
$\overline{\mathrm{span}\, \mathcal{L}\mathcal{K}}^{\mathrm{SOT}}=\mathcal{M}=
\overline{\mathrm{span}\, \mathcal{K}\mathcal{L}}^{\mathrm{SOT}}$. In
particular,
$\overline{[\mathrm{span}\, \mathcal{L}\mathcal{K}] \Omega}^{\|\cdot\|_2}=
L^2(\mathcal{M})=
\overline{[\mathrm{span}\, \mathcal{K}\mathcal{L}]\Omega }^{\|\cdot\|_2}$.
Therefore, we conclude that $\sum_i
\lambda_ie^{\mathcal{M}}_{\mathcal{L}}\lambda^*_i=1$ and the proof is
complete.
\end{proof}

Some specific conditions guarantee  non-degeneracy of some commuting squares.

\begin{lemma}\label{non-degenerate}
  Let $\mathcal{M}$ be a finite von Neumann algebra with a faithful
  normal tracial state and $(\mathcal{N}, \mathcal{P}, \mathcal{Q},
  \mathcal{M})$ be a commuting square consisting of von Neumann
  subalgebras of $\mathcal{M}$.  If, either
  \begin{enumerate}
\item $\mathcal{Q}\subset \mathcal{M}$ is an inclusion of
  $II_1$-factors with finite index and $\mathcal{N}\subset \mathcal{P}$ is a connected
  inclusion of finite dimensional $C^*$-algebras with
  \( [\mathcal{M}  : \mathcal{Q}] =\|\Lambda\|^2\),
  where $\Lambda$ denotes the
  inclusion matrix of $\mathcal{N}\subset \mathcal{P}$; or
  \item both $\mathcal{N}\subset \mathcal{P}$ and $\mathcal{Q}\subset
    \mathcal{M}$ are connected inclusions of finite dimensional
    $C^*$-algebras with \(  \|\Lambda\|^2=\|\Gamma\|^2\), where
     $\Lambda$  and  $\Gamma$ denote the respective inclusion matrices,
\end{enumerate}
  then $(\mathcal{N}, \mathcal{P}, \mathcal{Q}, \mathcal{M})$ is
  non-degenerate.  
\end{lemma}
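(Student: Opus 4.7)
The plan is to reduce both cases to a single trace computation inside the Jones basic construction over the edge $\mathcal{Q}\subset\mathcal{M}$. I would set $e:=e^\mathcal{M}_\mathcal{Q}$ and $\mathcal{M}_1:=\langle\mathcal{M},e\rangle$ (using the Markov trace on $\mathcal{Q}\subset\mathcal{M}$ in case (2)). The commuting-square identity $E^\mathcal{M}_\mathcal{Q}|_\mathcal{P}=E^\mathcal{P}_\mathcal{N}$ translates into $eye=E^\mathcal{P}_\mathcal{N}(y)\,e$ for every $y\in\mathcal{P}$, so that $\mathcal{P}_1:=\mathrm{alg}\{\mathcal{P},e\}\subset\mathcal{M}_1$ realises the basic construction of the connected finite-dimensional inclusion $\mathcal{N}\subset\mathcal{P}$ with respect to the Markov conditional expectation inherited from the ambient trace.

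Next, I would pick a right Pimsner-Popa basis $\{\lambda_i\}$ for $\mathcal{P}/\mathcal{N}$ and note that $p:=\sum_i\lambda_i e\lambda_i^*$ is the unit of $\mathcal{P}_1$, hence a projection in $\mathcal{M}_1$. A direct trace computation, using the Markov identity $\tau_{\mathcal{M}_1}(ye)=[\mathcal{M}:\mathcal{Q}]^{-1}\tau_\mathcal{M}(y)$ (respectively, $\|\Gamma\|^{-2}\tau_\mathcal{M}(y)$ in case (2)) together with the standard evaluation $\sum_i\tau_\mathcal{M}(\lambda_i\lambda_i^*)=\|\Lambda\|^2$ for the chosen basis of the connected inclusion, gives
\[
\tau_{\mathcal{M}_1}(p)\;=\;\frac{\|\Lambda\|^2}{[\mathcal{M}:\mathcal{Q}]}\;=\;1
\]
in case (1), and the analogous identity $\|\Lambda\|^2/\|\Gamma\|^2=1$ in case (2). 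Hence $p=1_{\mathcal{M}_1}$, which is precisely the condition for $\{\lambda_i\}$ to be a right Pimsner-Popa basis of $\mathcal{M}/\mathcal{Q}$. Consequently every $x\in\mathcal{M}$ admits the expansion $x=\sum_i\lambda_i E^\mathcal{M}_\mathcal{Q}(\lambda_i^*x)\in\mathrm{span}[\mathcal{P}\mathcal{Q}]$, so $\overline{\mathrm{span}[\mathcal{P}\mathcal{Q}]}^{\mathrm{SOT}}=\mathcal{M}$; taking adjoints gives the dual equality, establishing non-degeneracy.

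The real delicate point, and the only genuine obstacle, is ensuring that the trace inherited from the ambient state on $\mathcal{M}$ restricts to the Markov trace on each relevant edge, so that $\sum_i\tau_\mathcal{M}(\lambda_i\lambda_i^*)$ actually equals $\|\Lambda\|^2$ and, in case (2), the Jones projection of $\mathcal{Q}\subset\mathcal{M}$ carries the expected scalar $\|\Gamma\|^{-2}$. In case (1) this Markov compatibility follows from the connectedness of $\mathcal{N}\subset\mathcal{P}$ combined with the index hypothesis $[\mathcal{M}:\mathcal{Q}]=\|\Lambda\|^2$, since only the Markov trace attains the upper bound $\|\Lambda\|^2$ for the Watatani-type scalar index on a connected finite-dimensional inclusion; case (2) is handled identically using the hypothesis $\|\Lambda\|^2=\|\Gamma\|^2$. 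Once this trace-Markov alignment is in place, the remainder of the argument is a formal bookkeeping exercise.
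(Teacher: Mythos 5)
Your overall strategy --- reduce non-degeneracy to the single identity $\sum_i\lambda_i e^{\mathcal{M}}_{\mathcal{Q}}\lambda_i^*=1$ for a Pimsner--Popa basis $\{\lambda_i\}$ of $\mathcal{P}/\mathcal{N}$, and verify it by taking the trace of the projection $p=\sum_i\lambda_i e^{\mathcal{M}}_{\mathcal{Q}}\lambda_i^*$ --- is exactly the mechanism behind the characterization of bases in the references the paper cites for this lemma, so the skeleton is fine. The gap is precisely at the point you flag as delicate, and the justification you offer there is false. The quantity your trace computation needs is $\sum_i\tau(\lambda_i\lambda_i^*)=\tau\big(\mathrm{Ind}(E^{\mathcal{P}}_{\mathcal{N}})\big)$, and it is \emph{not} true that $\|\Lambda\|^2$ is an upper bound for this quantity attained only by the Markov trace. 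Concretely, take $\mathcal{N}=\{(\mathrm{diag}(a,b),a)\}\subset\mathcal{P}=M_2(\C)\oplus\C$, so $\Lambda=\bigl(\begin{smallmatrix}1&1\\1&0\end{smallmatrix}\bigr)$ and $\|\Lambda\|^2=\tfrac{3+\sqrt5}{2}$. For the trace with weight $t_1$ on each minimal projection of $M_2(\C)$ and $t_2=1-2t_1$ on the $\C$ summand, a direct computation gives $\mathrm{Ind}(E_\tau)=\big(\tfrac{1}{t_1}1_{M_2},\tfrac{t_1+t_2}{t_2}\big)$, hence $\tau(\mathrm{Ind}(E_\tau))=3-t_1$. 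This equals $\|\Lambda\|^2$ only at $t_1=\tfrac{3-\sqrt5}{2}$ (the Markov trace) and is \emph{strictly smaller} than $\|\Lambda\|^2$ for all $t_1\in(\tfrac{3-\sqrt5}{2},\tfrac12)$ and strictly larger for smaller $t_1$. So connectedness plus the index hypothesis does not, by the argument you give, yield $\sum_i\tau(\lambda_i\lambda_i^*)=\|\Lambda\|^2$; the inequality $\tau(\mathrm{Ind})\le[\mathcal{M}:\mathcal{Q}]$ does come for free (since $p$ is a projection), but the reverse inequality is exactly what is missing, and your proposed reason for it is wrong.

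The step can be repaired, but by controlling the \emph{operator norm} of the Watatani index rather than its trace. In the commuting square one has $E^{\mathcal{P}}_{\mathcal{N}}(x)=E^{\mathcal{M}}_{\mathcal{Q}}(x)\ge[\mathcal{M}:\mathcal{Q}]^{-1}x$ for all $x\in\mathcal{P}_+$ by the Pimsner--Popa inequality, whence $\|\mathrm{Ind}(E^{\mathcal{P}}_{\mathcal{N}})\|\le[\mathcal{M}:\mathcal{Q}]=\|\Lambda\|^2$. On the other hand, for a connected inclusion of finite dimensional $C^*$-algebras the inverse of the Pimsner--Popa constant, i.e.\ $\|\mathrm{Ind}(E_\tau)\|$, is always $\ge\|\Lambda\|^2$, with equality exactly when $\tau$ restricts to the Markov trace (a Perron--Frobenius fact; in the example above the two components $\tfrac{1}{t_1}$ and $\tfrac{t_1+t_2}{t_2}$ can be simultaneously $\le\|\Lambda\|^2$ only at the Markov weights). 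These two bounds force $\mathrm{Ind}(E^{\mathcal{P}}_{\mathcal{N}})=\|\Lambda\|^2\cdot 1$, and only then does your identity $\tau_{\mathcal{M}_1}(p)=\|\Lambda\|^2/[\mathcal{M}:\mathcal{Q}]=1$ follow; the same repair (applied on both edges) is needed in case (2), where you also cannot simply ``use the Markov trace on $\mathcal{Q}\subset\mathcal{M}$'' since the ambient trace is prescribed. As written, the proof is therefore incomplete at its one essential step.
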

\begin{proof}
A proof  can be obtained on similar lines as
that of \cite[Lemma 18]{BK}  based on the characterization of a basis illustrated in \cite[Theorem 2.2]{B}.  \end{proof}

\color{black} The next useful observation is a straight forward adaptation of
\cite[Proposition 3.3]{BG}, which uses the notion of path algebras
associated to inclusions of finite dimensional $C^*$-algebras by
Sunder and Ocneanu. We skip the details. 
\begin{lemma}\cite{BG} \label{path-algebra-result}
Let $A$ be a finite dimensional $C^*$-algebra and $\tr$ be a faithful
tracial state on $A$. Then, $A$ has a two-sided orthonormal basis over
$\C$ with respect to $\tr$.
\end{lemma}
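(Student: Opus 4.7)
The plan is to exploit the Artin--Wedderburn decomposition $A\cong \bigoplus_{k=1}^{r} M_{n_k}(\C)$ and produce the desired two-sided orthonormal basis from appropriately rescaled matrix units, block by block. This is exactly the finite-depth trivial case of the path-algebra description that underlies \cite[Proposition~3.3]{BG}: for the inclusion $\C\subset A$, the Bratteli diagram has a single top vertex and $r$ bottom vertices with $n_k$ edges to the $k$-th, and the resulting path-algebra description of $A$ produces matrix units as the path basis.

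First, using faithfulness of $\tr$, set $\alpha_k:=\tr(1_{M_{n_k}(\C)})>0$ so that $\sum_k \alpha_k=1$, and observe that on the $k$-th block $\tr$ must be a positive multiple of the (unnormalized) matrix trace $\mathrm{Tr}_k$, giving $\tr|_{M_{n_k}(\C)}=\tfrac{\alpha_k}{n_k}\mathrm{Tr}_k$. Let $\{e^k_{ij}\}_{1\le i,j\le n_k}$ denote the standard matrix units in $M_{n_k}(\C)$. Define
\[
\lambda^{k}_{ij}:=\sqrt{\tfrac{n_k}{\alpha_k}}\;e^k_{ij}\qquad (1\le k\le r,\ 1\le i,j\le n_k),
\]
and take $\mathcal{B}:=\{\lambda^k_{ij}\}$ as the candidate basis.

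Next, the three verifications are all straightforward and of the same nature. Orthonormality follows from $\tr\!\bigl(e^k_{ij}(e^{k'}_{lm})^*\bigr)=\delta_{k k'}\delta_{jm}\delta_{il}\,\tfrac{\alpha_k}{n_k}$, which after rescaling gives $\tr\!\bigl(\lambda^k_{ij}(\lambda^{k'}_{lm})^*\bigr)=\delta_{kk'}\delta_{il}\delta_{jm}$. For the left Pimsner--Popa identity, writing $x=\sum_{k,a,b} x^k_{ab}e^k_{ab}\in A$, the same computation yields $\tr\!\bigl(x(\lambda^k_{ij})^*\bigr)=\sqrt{\tfrac{\alpha_k}{n_k}}\,x^k_{ij}$, so $\sum_{k,i,j}\tr(x(\lambda^k_{ij})^*)\lambda^k_{ij}=\sum_{k,i,j}x^k_{ij}e^k_{ij}=x$. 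The right Pimsner--Popa identity $\sum_{k,i,j}\lambda^k_{ij}\tr\!\bigl((\lambda^k_{ij})^*x\bigr)=x$ follows by the mirror calculation, using traciality of $\tr$.

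There is no real obstacle here: the content of the lemma is just pointing out that, unlike the general subfactor situation in \cite{BG} where path algebras and the Bratteli diagram are genuinely needed to match left and right sides, the ground case $\C\subset A$ admits the completely explicit choice above, and this is all that will be required downstream. The only mild care is in tracking the block-dependent scaling $\sqrt{n_k/\alpha_k}$ so that the basis is simultaneously orthonormal, a left basis, and a right basis with respect to the given (possibly unbalanced) trace.
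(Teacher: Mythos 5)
Your proof is correct and is essentially the argument the paper has in mind: the paper simply cites \cite[Proposition~3.3]{BG} and the path-algebra formalism without details, and for the one-step inclusion $\C\subset A$ the path basis is precisely the family of matrix units of the Wedderburn blocks, which you rescale by $\sqrt{n_k/\alpha_k}$ and verify directly to be orthonormal and simultaneously a left and right Pimsner--Popa basis. The computations (orthonormality, both reconstruction identities via traciality, and the positivity of the block weights $\alpha_k$ from faithfulness) are all accurate, so this is a complete self-contained version of the proof the paper skips.
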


The following interesting observation is a folklore.
\begin{proposition}\label{downward depth}
  Let $N\subset M$ be a finite index depth $2$ subfactor of type $II_1$ and $M
  \supset N \supset N_{-1}\supset N_{-2}\supset \cdots \supset N_{-k}
  \supset \cdots$ be a tunnel construction for $N \subset M$. Then,
  $N_{-2k}\subset N_{-2k +1}$ has depth $2$ for all $k \geq 1$.

Moreover, $M_{2k-1} \subset M_{2k}$ is also of depth $2$ for all $ k\geq 1$.
\end{proposition}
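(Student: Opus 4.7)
The plan is to use the shift-invariance of the standard invariant. Arrange the tower-tunnel as $\cdots \subset N_{-1} \subset N_0 = N \subset N_1 = M \subset N_2 = M_1 \subset \cdots$, so that each $N_{i+1}$ is the Jones basic construction of $N_{i-1} \subset N_i$. The depth-$2$ hypothesis on $N \subset M$ is then the statement that the triple $N_0' \cap N_1 \subset N_0' \cap N_2 \subset N_0' \cap N_3$ is an instance of the Jones basic construction, and the conclusions to prove are the completely analogous statements with the index pair $(0,1)$ replaced by $(2k,2k+1)$ (for the tower direction, where $N_{2k} = M_{2k-1}$) and by $(-2k,-2k+1)$ (for the tunnel direction).

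I would invoke the Ocneanu/Jones $2$-shift: for each $\ell \in \Z$, there is a canonical $*$-isomorphism $\sigma_\ell : N_i' \cap N_j \to N_{i+2\ell}' \cap N_{j+2\ell}$ of the lattice of higher relative commutants that respects inclusions and carries Jones projections to Jones projections (the shift is an isomorphism because finite-depth subfactors are automatically extremal). Applying $\sigma_k$ with $k \geq 1$ to the depth-$2$ triple above yields that $N_{2k}' \cap N_{2k+1} \subset N_{2k}' \cap N_{2k+2} \subset N_{2k}' \cap N_{2k+3}$ is again an instance of basic construction, i.e.\ $M_{2k-1} \subset M_{2k}$ has depth $2$; applying $\sigma_{-k}$ gives the analogous conclusion for $N_{-2k} \subset N_{-2k+1}$.

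The main obstacle is justifying the $2$-shift $\sigma_\ell$ and its compatibility with the basic-construction structure. One can either invoke the standard construction via compression by a product of two consecutive Jones projections (the Jones/Ocneanu shift), or bypass shifts altogether by iterating the Nikshych--Vainerman characterization together with the Takesaki-type duality for crossed products by biconnected weak Hopf $C^*$-algebras: the depth-$2$ data $M_2 = M_1 \rtimes H$, $M = M_1^H$ dualizes to $M_3 = M_2 \rtimes \hat H$ and $M_1 = M_2^{\hat H}$ (with $\hat H$ again biconnected and acting minimally on $M_2$), so that $M_1 \subset M_2$ has depth $2$; an induction delivers the tower statement and a symmetric downward argument on the tunnel side handles the assertion for $N_{-2k} \subset N_{-2k+1}$.
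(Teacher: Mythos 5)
Your primary argument is essentially the paper's own proof: the authors also reduce everything to the $2$-shift of the standard invariant (via Bisch's shift isomorphism $N_{-2}'\cap M \cong N'\cap M_2$, resp.\ $N'\cap M_2\cong M_1'\cap M_4$) and then read off depth $2$ from the inclusion-matrix norm criterion of \cite[Theorem 4.6.3]{GHJ}, which is just a concrete implementation of your statement that the shift transports the basic-construction triple. Your alternative route via Nikshych--Vainerman duality is not needed here, though it is essentially what the paper deploys afterwards (in \Cref{dual-depth-2}) to handle the odd-indexed inclusions $M_{2k}\subset M_{2k+1}$.
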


\begin{proof}
  For the tunnel part, it suffices to show that $N_{-2}\subset N_{-1}$
  has depth $2$.

  Let $\Gamma$ and $\Omega$ denote the inclusion matrices for the
inclusions $(N_{-2}'\cap N_{-1} \subset N_{-2}'\cap N )$ and
$(N_{-2}'\cap N \subset N_{-2}'\cap M )$, respectively. Then, by
\cite[Theorem 4.6.3]{GHJ}, it will follow that $N_{-2} \subset N_{-1}$
has depth $2$ if we can show that $\|\Gamma\|^2 < [N_{-1}: N_{-2}] =
\|\Omega\|^2 $.

Consider the Jones' basic construction tower
  \[
N_{-2} \subset N_{-1} \subset N \subset M \subset M_1 \subset M_2 \subset M_3 \cdots \subset M_k \subset \cdots.
\]
By \cite[Theorem 2.13]{Bis}, there exists a $*$-isomorphism (the shift
operator) $\varphi: N_{-2}' \cap M \rar N'\cap M_2$ such that
$\varphi(N_{-2}'\cap N) = N'\cap M_1 $ and $\varphi(N_{-2}'\cap
N_{-1}) = N'\cap M $.  Thus, the truncated towers \( [N_{-2}'\cap
  N_{-1} \subset N_{-2}'\cap N \subset N_{-2}'\cap M ] \) and \(
[N'\cap M \subset N'\cap M_1 \subset N'\cap M_2] \) are isomorphic.
In particular, if $\Lambda_i$ denotes the inclusion matrix for the
inclusion $(N'\cap M_i \subset N'\cap M_{i+1} )$, then $\Lambda_0 =
\Gamma$ and $\Lambda_1 = \Omega$; so, by \cite[Theorem 4.6.3]{GHJ}, we
obtain \( \|\Omega\|^2 = \|\Lambda_1\|^2 = [M:N]= [N_{-1}: N_{-2}] \)
and \( \|\Gamma\|^2 = \|\Lambda_0\|^2 < [M:N] = [N_{-1}:N_{-2}]  \).\smallskip

For the basic construction part, it suffices to show that $M_1\subset
M_2$ has depth $2$. The shift operator $\psi: N'\cap M_2 \rar
M_1'\cap M_{4}$ does the job as above.
\end{proof}

\begin{corollary}\label{dual-depth-2}
  Let $N\subset M$ be a finite index depth $2$ subfactor of type $II_1$. Then, $M_k
  \subset M_{k+1}$ also has depth $2$ for all $k \geq 0$.

  In particular, $N_{-k}\subset N_{-k +1}$ has depth $2$ for all $k
  \geq 1$, for any tunnel construction $M \supset N \supset
  N_{-1}\supset N_{-2}\supset \cdots \supset N_{-k} \supset \cdots$ of
  $N \subset M$.
  \end{corollary}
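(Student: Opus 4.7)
The plan is to bootstrap \Cref{downward depth} by first checking that depth $2$ is preserved under a single tower or tunnel step, and then iterating.

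First I would verify that $M \subset M_1$ also has depth $2$. This relies on the standard identification, via the modular conjugation $J_M$, of $*$-anti-isomorphisms $M' \cap M_{k+1} \cong N' \cap M_k$ for every $k \geq 0$ that intertwine the inclusion chains. Under this identification, the depth-$2$ hypothesis on $N \subset M$ --- namely, that $[N' \cap M \subset N' \cap M_1 \subset N' \cap M_2]$ is an instance of basic construction --- transports to the statement that $[M' \cap M_1 \subset M' \cap M_2 \subset M' \cap M_3]$ is an instance of basic construction, giving depth $2$ for $M \subset M_1$. The same reasoning applied in the downward direction gives that $N_{-1} \subset N$ has depth $2$.

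For the basic construction part of the corollary I would then combine three pieces: \Cref{downward depth} applied to $N \subset M$ gives $M_{2k-1} \subset M_{2k}$ has depth $2$ for every $k \geq 1$; \Cref{downward depth} applied to the depth-$2$ inclusion $M \subset M_1$ (whose Jones tower is $M \subset M_1 \subset M_2 \subset \cdots$) gives $M_{2k} \subset M_{2k+1}$ has depth $2$ for every $k \geq 1$; and the preceding paragraph supplies the remaining base case $M_0 = M \subset M_1$. Together these cover $M_k \subset M_{k+1}$ for all $k \geq 0$.

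For the tunnel part a symmetric argument works: \Cref{downward depth} applied to $N \subset M$ handles the even steps $N_{-2k} \subset N_{-2k+1}$ for $k \geq 1$, while \Cref{downward depth} applied to the depth-$2$ inclusion $N_{-1} \subset N$ (whose own tunnel $N_{-2} \supset N_{-3} \supset \cdots$ sits inside the original one) handles the odd steps $N_{-(2k+1)} \subset N_{-2k}$ for $k \geq 1$. The one point that really needs care is the transfer of depth $2$ under a one-step shift asserted in the second paragraph; once that is in place, the rest is pure book-keeping on indices.
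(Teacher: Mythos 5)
The index book-keeping in your second half is fine, but the key step --- the claim that the modular conjugation $J_M$ induces $*$-anti-isomorphisms $M'\cap M_{k+1}\cong N'\cap M_k$ for \emph{every} $k\geq 0$ intertwining the two inclusion chains --- is false for $k\geq 1$, and this is exactly where the real content of the corollary lies. The only valid instance is $k=0$, i.e., $M'\cap M_1 = J_M(N'\cap M)J_M$. For higher relative commutants, the conjugation $j_m := J_{M_m}(\cdot)^*J_{M_m}$ carries $M_{m+j}$ onto $M_{m-j}'$, hence carries $N'\cap M_k$ onto $M_{2m-k}'\cap M_{2m+1}$: both legs of the relative commutant move, and no choice of $m$ fixes the lower leg at $M'$ while raising the upper leg by one uniformly along the chain. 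Concretely, for $N=R\subset M=R\rtimes G$ with $G$ finite non-abelian acting outerly, one has $N'\cap M_1\cong \ell^{\infty}(G)$ (commutative, spanned by the projections $u_g e_1 u_g^*$), whereas $M'\cap M_2\cong \C[G]$ is non-commutative (e.g.\ via Bisch's shift, $M'\cap M_2\cong N_{-1}'\cap M\supseteq \C[G]$ for the tunnel choice $N_{-1}=N^G$). Thus $M'\cap M_2$ and $N'\cap M_1$ need not even be isomorphic as algebras, let alone anti-isomorphic compatibly with the inclusions: the towers $\{N'\cap M_k\}_k$ and $\{M'\cap M_{k+1}\}_k$ are governed by the principal and the \emph{dual} principal graph respectively, and these differ in general.

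The conclusion you want (depth $2$ passes from $N\subset M$ to $M\subset M_1$ and to $N_{-1}\subset N$) is true, but it requires genuine input rather than a conjugation argument. The paper obtains it by applying \Cref{downward depth} to get depth $2$ for $N_{-2}\subset N_{-1}$, invoking the Nikshych--Vainerman characterization to realize $M\cong N\rtimes H$ with $N_{-1}=N^H$ for a biconnected weak Hopf $C^*$-algebra $H$, and then using that such crossed-product/fixed-point inclusions have depth $2$ on both sides (B\"ohm--Nill--Szlach\'anyi); one further application of \Cref{downward depth}, now to $N_{-1}\subset N$, yields depth $2$ for $M\subset M_1$. Once this base case is supplied, your iteration of \Cref{downward depth} over the shifted towers and tunnels is correct and matches what the paper does.
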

\begin{proof}
It suffices to show that $M \subset M_1$ is of depth $2$.

Fix a $2$-step downward basic construction $N_{-2}\subset N_{-1}
\subset N$ of $N \subset M$. Then, by the preceding proposition,
$N_{-2} \subset N_{-1}$ is also of depth $2$. So, by \cite{NV1}, there
exists a biconnected weak Hopf $C^*$-algebra $H$ with a minimal action
on $N$ such that $(N^H \subset N)\cong (N_{-1} \subset N)$. Thus,
$N_{-1} \subset N$ is also of depth $2$, by \cite{BNS} (also see
\cite[Section 8.1]{NV2}). Thus, by \Cref{downward depth} again,
$M\subset M_1$ is also of depth $2$.
  \end{proof}

We are now all set for the theorem of this subsection.

\begin{theorem}\label{two-sided-depth-2}
  Let $N\subset M$ be a finite index inclusion of type $II_1$-factors
  of depth $2$ with simple relative commutant $N^{\prime}\cap
  M$. Then, $M_2$ admits a two-sided orthonormal basis over $M_1$.

 Furthermore, $M$ also admits a two-sided orthonormal basis over $N$.
\end{theorem}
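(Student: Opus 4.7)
The plan is to derive both parts from the same commuting-square template. For an inclusion $A\subset B$ of finite von Neumann algebras with $A$ a factor, the natural quadruple $(\C, A'\cap B, A, B)$ is automatically a commuting square: for $x\in A'\cap B$ the expectation $E_A(x)$ commutes with all of $A$ and so lies in $Z(A)=\C$, forcing $E_A(x)=\tr_B(x)\cdot 1=E_\C(x)$. Given a two-sided $\tr_B$-orthonormal basis of $A'\cap B$ over $\C$ --- which exists by \Cref{path-algebra-result} --- together with non-degeneracy of the square, \Cref{popacommsqrs} (applied on both sides by taking adjoints) lifts it to a two-sided basis of $B$ over $A$; orthonormality persists because the commuting-square identity gives $E_A(\lambda_i^*\lambda_j)=\tr_B(\lambda_i^*\lambda_j)\cdot 1=\delta_{i,j}$.

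For the ``furthermore'' assertion I take $(A,B)=(N,M)$. To get non-degeneracy from \Cref{non-degenerate}(1), I need to verify $[M:N]=\|\Lambda\|^2$, where $\Lambda$ is the inclusion matrix of $\C\subset N'\cap M$; the finite-dimensional inclusion $\C\subset N'\cap M\cong M_n(\C)$ is trivially connected by the simplicity hypothesis. Depth $2$, in the formulation of \Cref{depth-defn} with $k=2$, says precisely that $\C\subset N'\cap M\subset N'\cap M_1$ is an instance of basic construction, and combined with \cite[Theorem 4.6.3]{GHJ} this forces $[M:N]=\|\Lambda\|^2=\dim_\C(N'\cap M)=n^2$. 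Simplicity of $N'\cap M$ is crucial here: it makes $\tr_M|_{N'\cap M}$ the unique tracial state on $M_n(\C)$ and hence the Markov trace required for the index identity to hold.

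For the first assertion --- a two-sided basis of $M_2$ over $M_1$ --- the same template applies to the inclusion $M_1\subset M_2$, which is again a depth-$2$ inclusion of $II_1$-factors of index $[M:N]$ by \Cref{dual-depth-2}. The only extra input needed is simplicity of $M_1'\cap M_2$, which I would obtain from the Bisch shift $*$-isomorphism already used in the proof of \Cref{downward depth}: regarding $N\subset M\subset M_1$ as a downward tunnel for $M_1\subset M_2$ and $M_2\subset M_3\subset M_4$ as its Jones tower, Bisch's theorem yields a $*$-isomorphism $\varphi:N'\cap M_2\to M_1'\cap M_4$ whose restriction carries $N'\cap M$ onto $M_1'\cap M_2$. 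Hence $M_1'\cap M_2\cong N'\cap M\cong M_n(\C)$ is simple with $\dim_\C(M_1'\cap M_2)=n^2=[M_2:M_1]$, and the template applies verbatim to $(\C,M_1'\cap M_2,M_1,M_2)$. The only anticipated obstacle is this shift identification; once it is in place, the rest of both assertions reduces to a clean non-degenerate commuting-square lifting argument.
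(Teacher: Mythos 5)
Your argument for the ``furthermore'' part hinges on the identity $[M:N]=\|\Lambda\|^2=\dim_\C(N'\cap M)$, which you extract by reading depth $2$ as saying that $\C\subset N'\cap M\subset N'\cap M_1$ is an instance of the basic construction. That is an off-by-one misreading of \Cref{depth-defn}: with the convention $M_{-1}=N$, $M_0=M$, the case $k=2$ of the definition asserts that $N'\cap M\subset N'\cap M_1\subset N'\cap M_2$ is a basic construction, whereas ``$\C\subset N'\cap M\subset N'\cap M_1$ is a basic construction'' is precisely the depth-$1$ condition. For a subfactor of depth exactly $2$ one has $\dim_\C(N'\cap M)<[M:N]$, so your key identity fails. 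The irreducible case makes this transparent: for $N\subset N\rtimes G$ with $G$ outer and nontrivial, $N'\cap M=\C$ is simple and the depth is $2$, yet $\dim_\C(N'\cap M)=1\neq |G|=[M:N]$; your template would then declare $\{1\}$ a Pimsner--Popa basis for $M$ over $N$. A reducible instance is $\C\otimes K\subset M_n(\C)\otimes(K\rtimes G)$ from \Cref{gap}, where the relative commutant is $M_n(\C)$ but the index is $n^2|G|$. The structural consequence is that the quadruple $(\C,\,N'\cap M,\,N,\,M)$, while always a commuting square, is genuinely \emph{degenerate} whenever the depth is exactly $2$: $\overline{\mathrm{span}}\,[N\cdot(N'\cap M)]=N\vee(N'\cap M)\subsetneq M$, so \Cref{popacommsqrs} lifts your unitary basis of $N'\cap M$ only to a basis of the intermediate subfactor $N\vee(N'\cap M)$ over $N$, not of $M$ over $N$. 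The same objection applies verbatim to your treatment of $M_1\subset M_2$ via $(\C,\,M_1'\cap M_2,\,M_1,\,M_2)$, so neither assertion is reached.

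What depth $2$ actually provides is the equality $\|\Lambda\|^2=[M:N]$ for the inclusion matrix $\Lambda$ of $N'\cap M\subset N'\cap M_1$ --- one level up from where you placed it. Accordingly, the commuting square that \Cref{non-degenerate} certifies as non-degenerate is $(N'\cap M,\,N'\cap M_1,\,M,\,M_1)$; one must then pass through its extension and a concatenation to reach $(M'\cap M_1,\,M'\cap M_2,\,M_1,\,M_2)$, and separately produce a two-sided orthonormal basis of $M'\cap M_2$ over $M'\cap M_1$. Since this is no longer an inclusion over $\C$, \Cref{path-algebra-result} does not apply directly; the simplicity hypothesis is used to split $M'\cap M_2\cong(M'\cap M_1)\otimes Q$ before invoking it. Only after all of this does one descend to $N\subset M$ by a $2$-step tunnel. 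Your identification $M_1'\cap M_2\cong N'\cap M$ via the shift is correct and is indeed part of such an argument, but it does not repair the degeneracy of the squares you chose.
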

\begin{proof} 
  Although some of the arguments below are well-known (see \cite{pop2}),
we provide sufficient details for the sake of self-containment and
convenience of the reader. \medskip

  \noindent {\bf Step I:} Any (left/right) basis for $M^{\prime}\cap M_2$
over $M^{\prime}\cap M_1$ is also a (left/right) basis for $M_2$ over $M_1$.  \medskip

Note that, by \Cref{popacommsqrs}, it suffices to show that 
the quadruple
$$
\begin{matrix}
  M_1 &\subset & M_2 \cr \cup &\ &\cup\cr M^{\prime}\cap M_1 &\subset & M^{\prime}\cap M_2
\end{matrix}
$$
is  a non-degenerate commuting square.  Towards this direction, first, recall that the quadruple
\[
\mathcal{G}_1 := \begin{matrix} M &\subset & M_1 \cr
  \cup &\ &\cup\cr N^{\prime}\cap M &\subset & N^{\prime}\cap M_1
\end{matrix}
\]
is a commuting square - see, for instance, \cite[Proposition 4.2.7]{GHJ},
wherein the bottom inclusion is connected.

Let $\Lambda$ denote the inclusion matrix for the inclusion
$N^{\prime}\cap M\subset N^{\prime}\cap M_1$. Since $N\subset M$ is of
depth $2$, as was recalled in \Cref{downward depth}, we have
$[M_1:M]=[M:N]= {\lVert \Lambda\Vert}^2.$ Therefore, by
\Cref{non-degenerate}, the quadruple $\mathcal{G}_1$ is a
non-degenerate commuting square. Thus, its extension (as defined in
\cite[$\S 1.1.6$]{pop2}) is given by the quadruple
\[
\mathcal{G}_2:=\begin{matrix}
  M_1 &\subset & M_2 \cr \cup &\ &\cup\cr N^{\prime}\cap M_1 &\subset & N^{\prime}\cap M_2
\end{matrix};
\]
and, by the proposition in $\S 1.1.6$ of \cite{pop2}, $\mathcal{G}_2$
is a non-degenerate commuting square as well. On the other hand, note
that if $\Gamma$ denotes the inclusion matrix for $M'\cap M_1 \subset
M^{\prime}\cap M_2$, then since $M \subset M_1$ is also of depth $2$
(see \Cref{dual-depth-2}), we have $\|\Gamma\|^2 = [M_1:M]= [M:N]=\|\Lambda^T
\|^2$; so, the commuting square
\[
\mathcal{G}_3:=\begin{matrix} N^{\prime}\cap M_1 &\stackrel{\Lambda^T}{\subset} &
N^{\prime}\cap M_2 \cr \cup &\ &\cup\cr M^{\prime}\cap M_1 &\stackrel{\Gamma}{\subset} &
M^{\prime}\cap M_2
\end{matrix}
\]
is also non-degenerate,  by \Cref{non-degenerate}. In particular,
concatenating $\mathcal{G}_2$ and $\mathcal{G}_3$, we deduce from
\cite[$\S 1.1.5$]{pop2} that the quadruple
$$
\begin{matrix}
  M_1 &\subset & M_2 \cr \cup &\ &\cup\cr M^{\prime}\cap M_1 &\subset & M^{\prime}\cap M_2
\end{matrix}
$$
is  a non-degenerate commuting square.
\medskip

\noindent {\bf Step II:} $M^{\prime}\cap M_2$ has a two-sided orthonormal basis
over $M^{\prime}\cap M_1$. \medskip

We assert that $\big( M'\cap M_1 \subset M^{\prime}\cap M_2\big)$ is
isomorphic to $\Big( M'\cap M_1 \subset (M^{\prime}\cap M_1) \otimes Q
\Big)$ for some unital $*$-subalgebra $Q$ of $(M'\cap M_1)' \cap
(M'\cap M_3)$. Once this is established, we can then readily deduce
from \Cref{path-algebra-result} that $M^{\prime}\cap M_2$ has a
two-sided orthonormal basis over $M^{\prime}\cap M_1$.

 Since $N'\cap M \ni x \mapsto Jx^*J \in M'\cap M_1$ is an
 anti-isomorphism and $N'\cap M$ is simple, so is $M^{\prime}\cap
 M_1$. Again, since $M \subset M_1$ is also of depth $2$, the tower
 \[
M'\cap M_1 \subset M'\cap M_2 \subset M'\cap M_3
 \]
is an instance of basic construction. So, $M^{\prime}\cap M_3$ is also simple.
Thus, it follows from \cite[Lemma 2.2.2]{GHJ} that $(M^{\prime}\cap
M_1)^{\prime}\cap (M^{\prime}\cap M_3)$ is simple and that
\[
M^{\prime}\cap M_3 \cong (M^{\prime}\cap
M_1)\otimes \big[(M^{\prime}\cap M_1)^{\prime}\cap (M^{\prime}\cap
  M_3)\big].
\]
 Suppose that
$M^{\prime}\cap M_1\cong M_n(\C)$ and that $M^{\prime}\cap M_3\cong M_n(\C)\otimes
 M_k(\C).$ Denote the intermediate subalgebra corresponding to $M^{\prime}\cap M_2$ by
 $P$. It is well-known that $P$ is of the form $M_n(\C)\otimes Q$, where $Q$ is some unital $*$-subalgebra of $M_k(\C)$. We provide the details for the convenience of the reader. By \cite[Proposition 4.2.7]{GHJ} again, the quadruple
 \[
 \mathcal{G}_4:=\begin{matrix} P &\subset & M_n(\C)\otimes M_k(\C) \cr
 \cup &\ &\cup\cr {\big(M_n(\C) \ot 1\big)}^{\prime}\cap P &\subset &
 1\otimes M_k(\C).
 \end{matrix}
 \]
is also a commuting square. Note that, there exists a unital
$*$-subalgebra $Q$ of $M_k(\C)$ such that ${\big(M_n(\C) \ot
  1\big)}^{\prime}\cap P=1\otimes Q$. Clearly,  $M_n(\C)\otimes Q
\subseteq P$. To see the reverse inclusion, consider $x = \sum_i
a_i\otimes b_i \in P \subset M_n(\C)\otimes M_k(\C).$ Then, we have $x= \sum
(a_i\otimes 1) E_P(1\otimes b_i)$. Since $\mathcal{G}_4$ is a
commuting square, we immediately see that $E_P(1\otimes b_i)\in
1\otimes Q$ and hence $x\in M_n(\C)\otimes Q.$ In conclusion, we have
$P=M_n(\C)\otimes Q$, as was asserted. 

Thus, from Steps I and II, we deduce that  $M_2$  has a two-sided orthonormal basis over $M_1$.\medskip

Finally, fix any 2-step downward basic construction $N_{-2}\subset
N_{-1}\subset N\subset M$ for $N\subset M$. Then, by \Cref{downward
  depth}, $N_{-2}\subset N_{-1}$ also has depth $2$. Further, as seen
in \Cref{downward depth}, $N_{-2}'\cap N_{-1} \cong N'\cap M$ is
simple. Hence, we readily deduce from the preceding discussion that
$M$ must admit a two-sided orthonormal basis over $N$.
  \end{proof}
As an immediate consequence we deduce the following.
\begin{corollary}
 Every finite index irreducible inclusion of $II_1$ factors of depth 2 admits a two-sided orthonormal basis.
\end{corollary}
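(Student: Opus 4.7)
The corollary is an immediate specialization of \Cref{two-sided-depth-2}. The plan is simply to verify that the hypothesis ``$N'\cap M$ is simple'' is automatic in the irreducible setting, and then invoke the theorem directly.

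More precisely, if $N\subset M$ is irreducible then by definition $N'\cap M=\C$, which is a (one-dimensional) simple $C^*$-algebra. Thus the hypotheses of \Cref{two-sided-depth-2} are met, and the ``Furthermore'' part of that theorem yields a two-sided orthonormal Pimsner--Popa basis of $M$ over $N$. There is no further obstacle; the entire content of the corollary is already contained in the theorem, and no additional argument is required beyond pointing out that $\dim_\C(N'\cap M)=1$ trivially implies simplicity.
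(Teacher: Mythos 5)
Your proposal is correct and matches the paper's intent exactly: the corollary is stated there as ``an immediate consequence'' of \Cref{two-sided-depth-2}, and the only observation needed is that irreducibility gives $N'\cap M=\C$, which is simple, so the theorem applies verbatim.
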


\begin{remark}
Note that a subfactor as in \Cref{two-sided-depth-2} need not be regular. For instance,
any  Kac algebra $K$,  which is not a group algebra, acts outerly on
the hyperfinite factor $R$ and yields a non-regular irreducible depth
$2$ subfactor.
  \end{remark}
\subsection{Unitary orthonormal basis} \( \)

We now move towards unitary orthonormal bases and touch upon another
fundamental question asked recently by Sorin Popa in \cite[$\S$
  3.5]{pop1}.
\begin{question}[Sorin Popa]\label{popa}
 Does there always exist an orthonormal basis consisting of $n$ many
 unitaries for an integer index ($=n$) irreducible inclusion of
 $II_1$-factors?
\end{question}

\begin{example}\label{subgroup-subfactor}
Let $H$ be a subgroup of a finite group $G$. If $G$ acts
  outerly on a $II_1$-factor $N$, then $N \rtimes G$ has a unitary
  orthonormal basis over $N\rtimes H$.
  In particular, every irreducible regular subfactor, being isomorphic to a group
  subfactor, admits a unitary orthonormal basis.
\end{example}
\color{black}

  In view of the preceding example, it is natural to ask whether we
  can drop the irreducibility condition or not. The following remarks fall in
  place here:
\begin{remark}\label{silmistake}
  \begin{enumerate}
\item The question of existence of unitary orthonormal basis for a
  general finite index regular subfactor which is not necessarily
  irreducible (thus modifying \Cref{popa}) was discussed by
  Ceccherini-Silberstein \cite{S}. In fact, he asserted (in \cite[Theorem
    4.5]{S}) that if $N\subset M$ is a regular subfactor with finite
  index, then $M/N$ has a unitary orthonormal basis. However, his
  proof depends on a technique of Popa (\cite[Theorem 2.3]{Pop3})
  which holds for Cartan subalgebras. Since Popa's proof depends
  crucially on maximal abelian-ness of the subalgebra, it is not clear
  whether it holds, more generally, for regular subalgebras or
  not. So, the proof of \cite[Theorem 4.5]{S} seems to be incomplete;
  although, the statement may still be true, which we rephrase in
  \Cref{regular-u-onb}.
    \item Furthermore, Ceccherini-Silberstein (in \cite[Theorem 4.7]{S}) had
      also asserted that if $M/N$ has a unitary orthonormal basis then
      the subfactor $M_1\subset M_2$ is of the form $M_1 \subset M_1
      \rtimes H$ and {hence has depth $2$ (see for instance
        \cite{NV1})}, which then implies that $N \subset M$ is also of
      depth $2$ - see \Cref{downward depth}.  However, this is well
      known to be incorrect as every group-subgroup subfactor
      $(R\times H)\subset (R\rtimes G)$ always has a unitary
      orthonormal basis (see \Cref{subgroup-subfactor}) whereas it is
      not necessarily of depth $2$.
      \item Though not directly related to the present discussion, the
        characterization of index 3 subfactors provided in Corollary
        3.19 of \cite{S} is known to be incorrect.
\end{enumerate}
\end{remark}
\color{black}

\begin{conjecture}\label{regular-u-onb}
Let $N\subset M$ be a finite index regular inclusion of factors of
type $II_1$.  Then, $M/N$ has a unitary orthonormal basis.
\end{conjecture}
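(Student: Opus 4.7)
Since the statement is posed as an open conjecture, what follows is a proof strategy rather than a complete proof, aimed at extending \Cref{regularunitary} from the commutative and simple cases to arbitrary finite-dimensional $N'\cap M$. By \Cref{dimn-formula} from \cite{BG}, $[M:N] = |G|\,\mathrm{dim}_{\C}(N'\cap M)$, where $G$ denotes the generalized Weyl group, which suggests seeking the sought unitary orthonormal basis in the factored form $\{u_g v_j : g \in G,\, 1 \leq j \leq \mathrm{dim}_{\C}(N'\cap M)\}$. Here $\{u_g\}$ is a transversal of unitary lifts in $\mathcal{N}_M(N)$ of $\mathcal{N}_M(N)/\mathcal{U}(N)\mathcal{U}(N'\cap M)$, and $\{v_j\}$ is a unitary orthonormal basis of $N'\cap M$, viewed as a $\C$-vector space with respect to $\tr_M|_{N'\cap M}$. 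Factoriality of $N$ forces $E_N|_{N'\cap M}$ to be the scalar trace, so orthonormality of $\{v_j\}$ inside $N'\cap M$ would transfer cleanly to orthonormality of $\{u_g v_j\}$ on the diagonal in $g$. For the off-diagonal blocks one needs $E_N(u_g a u_{g'}^*) = 0$ for $g \neq g'$ and $a \in N'\cap M$, an outerness statement built into the definition of $G$ as a quotient by $\mathcal{U}(N)\mathcal{U}(N'\cap M)$.

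The main technical ingredient, and the primary obstacle, is the construction of a unitary orthonormal basis $\{v_j\}$ of the finite-dimensional $C^*$-algebra $N'\cap M \cong \bigoplus_{i=1}^k M_{n_i}(\C)$ with respect to the restricted trace. For each simple summand this is classical --- generalised Pauli/Weyl unitaries in $M_{n_i}(\C)$, or a complex Hadamard matrix in the commutative case --- but in the mixed case one must \emph{glue} the per-summand bases into unitaries of the full direct sum, not merely partial isometries supported on a single block. The scalarity of the Watatani index of $\tr_M|_{N'\cap M}$ established in \Cref{scalar-index} pins down the relative weights across summands to be commensurable and should enable such a gluing via a Hadamard-type phase-tuning: replace a family of per-summand unitary bases $\{v^{(i)}_j\}$ by combined elements $\sum_i \omega_{i,j}\, p_i\, v^{(i)}_{\sigma_i(j)}$, with scalars $\omega_{i,j}$ and index permutations $\sigma_i$ chosen so that distinct combined elements become orthogonal across all summands simultaneously.

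A further compatibility issue arises because the $\{u_g\}$ act on $N'\cap M$ by conjugation and may permute the central projections $\{p_i\}$, so that $\{u_g v_j\}$ need not be orthogonal across the $G$-blocks without further work. This forces either a $G$-equivariant construction of $\{v_j\}$ or an adjustment of the unitary lifts $u_g$ within their cosets by elements of $\mathcal{U}(N'\cap M)$. This global compatibility is exactly the subtlety that invalidates the purely analytic argument of \cite[Theorem 4.5]{S} (cf. \Cref{silmistake}); circumventing it seems to require either a refined structural result for regular subfactors extending \Cref{mainresult} beyond the commutative-commutant case, or a delicate direct construction exploiting both the normalizer structure and the combinatorics of the $G$-action on the central decomposition of $N'\cap M$.
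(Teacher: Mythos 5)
This statement is posed in the paper as \Cref{regular-u-onb}, an \emph{open conjecture}: the paper does not prove it, and offers only the partial result \Cref{regularunitary} (the cases where $N'\cap M$ is commutative or simple). Your proposal, which honestly presents itself as a strategy rather than a proof, follows essentially the same route as that partial result: factor the problem through the intermediate algebra $N\vee(N'\cap M)$, take a unitary orthonormal basis of $N'\cap M$ over $\C$ with respect to $\tr_M|_{N'\cap M}$ (which is the Markov trace by \Cref{markov}), promote it via the non-degenerate commuting square and \Cref{popacommsqrs}, and multiply by a unitary orthonormal basis of $M$ over $N\vee(N'\cap M)$ coming from the normalizer, as in \cite{BG}. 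You have correctly isolated the one genuine missing ingredient: a unitary orthonormal basis of a general multi-matrix algebra $\bigoplus_i M_{n_i}(\C)$ with respect to the Markov trace of $\C\subset A$. \Cref{unitary-onb} handles only the two extreme cases (one summand, or all $n_i=1$), and the ``gluing'' of per-summand Weyl/Hadamard bases across summands with unequal weights $n_i^2/\sum_j n_j^2$ is exactly where the conjecture remains open; your phase-tuning heuristic is plausible but is not a proof, and the paper itself flags this finite-dimensional question as the point worth investigating.

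One correction: your third paragraph mislocates a difficulty. The worry that conjugation by the normalizer representatives $u_g$ permutes the central projections of $N'\cap M$ and spoils orthogonality across the $G$-blocks does not arise if one orders the factors and conditions in two steps, as in the proof of \Cref{regularunitary}: writing $E^M_N=E^{\mathcal R}_N\circ E^M_{\mathcal R}$ with $\mathcal R=N\vee(N'\cap M)$ and taking the normalizer family orthonormal over $\mathcal R$ (not merely over $N$), the cross terms vanish automatically because $E^M_{\mathcal R}(v_j^*v_k)=\delta_{j,k}$ already kills the off-diagonal blocks before the inner basis of $N'\cap M$ ever enters. So no $G$-equivariance of the inner basis is needed; the entire burden of the conjecture rests on the finite-dimensional Markov-trace problem, and your proposal, like the paper, leaves that unresolved.
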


As a partial progress in the resolution of this conjecture, we prove
the following:
\begin{theorem}\label{regularunitary}
Let $N\subset M$ be a finite index regular inclusion of factors of
type $II_1$. If $N^{\prime}\cap M$ is either commutative or simple, then
$M$ admits a unitary orthonormal basis over $N$.
\end{theorem}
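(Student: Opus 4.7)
The plan is to build an explicit unitary orthonormal basis of $M$ over $N$ of the form $\{u_iw_j\}$, where $\{u_1,\ldots,u_m\}\subset \mathcal{N}_M(N)$ is a set of representatives of the distinct cosets of the generalized Weyl group $G=\mathcal{N}_M(N)/(\mathcal{U}(N)\mathcal{U}(N'\cap M))$ and $\{w_1,\ldots,w_d\}$ is a unitary orthonormal basis of $N'\cap M$ over $\mathbb{C}$ (with $d=\dim(N'\cap M)$). Thanks to \Cref{dimn-formula}, the cardinality $md=|G|\cdot \dim(N'\cap M)=[M:N]$ automatically matches the expected size.

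The hinge of the argument is the following lemma: for any $u\in \mathcal{N}_M(N)$ with $[u]\neq[1]$ in $G$, one has $E_N(u)=0$. Writing $x=E_N(u)\in N$ and $\alpha_u(n):=unu^*\in N$, the two evaluations $E_N(un)=E_N(u)n=xn$ and $E_N(un)=E_N(\alpha_u(n)u)=\alpha_u(n)x$ yield the intertwining relation $xn=\alpha_u(n)x$ for every $n\in N$. Taking adjoints gives $nx^*=x^*\alpha_u(n)$, and combining both shows that $x^*x$ and $xx^*$ commute with all of $N$, hence lie in $\mathcal{Z}(N)=\mathbb{C}$. If $x\neq 0$, then both scalars equal $\|x\|^2>0$, so $n_0:=x/\|x\|$ is a unitary of $N$ satisfying $\alpha_u=\mathrm{Ad}(n_0)$; therefore $n_0^*u\in \mathcal{U}(N'\cap M)$ and $[u]=[1]$, a contradiction.

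For the unitary orthonormal basis of $N'\cap M$ over $\mathbb{C}$, we appeal to \Cref{scalar-index}, which forces $\tr_M|_{N'\cap M}$ to be the trace of scalar Watatani index. In the commutative case $N'\cap M\cong \mathbb{C}^k$, this means the minimal projections $e_0,\ldots,e_{k-1}$ all have trace $1/k$, and one takes $w_j:=\sum_{l=0}^{k-1}\omega^{jl}e_l$ with $\omega$ a primitive $k$-th root of unity. In the simple case $N'\cap M\cong M_n(\mathbb{C})$, the restricted trace is the normalized trace, and one takes $\{w_j\}$ to be the $n^2$ Heisenberg-Weyl clock-and-shift unitaries. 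In either case one verifies directly that $\tr_M(w_jw_{j'}^*)=\delta_{jj'}$; since $E_N$ restricted to $N'\cap M$ is scalar-valued (its image lies in $\mathcal{Z}(N)=\mathbb{C}$), we immediately conclude $E_N(w_jw_{j'}^*)=\delta_{jj'}$ as well.

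To close the argument, each $u_iw_j$ is a product of unitaries and hence unitary. Using that $u_{i'}$ normalizes $N$, so $E_N(u_{i'}yu_{i'}^*)=u_{i'}E_N(y)u_{i'}^*$ for any $y\in M$, we compute
\[
E_N\bigl(u_iw_j w_{j'}^*u_{i'}^*\bigr)=u_{i'}E_N\bigl((u_{i'}^*u_i)(w_jw_{j'}^*)\bigr)u_{i'}^*.
\]
Because $w_jw_{j'}^*\in \mathcal{U}(N'\cap M)$, the element $(u_{i'}^*u_i)(w_jw_{j'}^*)$ is again a unitary normalizer of $N$, and its class in $G$ equals $[u_{i'}^*u_i]$. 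For $i\neq i'$ this class is non-trivial, so by the key lemma the $E_N$-image vanishes; while for $i=i'$ the expression collapses to $u_iE_N(w_jw_{j'}^*)u_i^*=\delta_{jj'}$. Hence $\{u_iw_j\}$ is a unitary orthonormal basis for $M$ over $N$ of the expected size. The main obstacle is the intertwining lemma; the remaining steps are straightforward bookkeeping once the appropriate Weyl-type unitary basis of $N'\cap M$ is in place.
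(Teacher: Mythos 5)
Your argument is correct, but it is organized rather differently from the paper's. Both proofs use the same unitary orthonormal basis of $N'\cap M$ over $\C$ (DFT unitaries in the commutative case, clock--and--shift unitaries in the simple case), justified in the same way through the scalarity of the Watatani index of ${\tr_M}_{|_{N'\cap M}}$ (\Cref{scalar-index}, \Cref{markov}, \Cref{unitary-onb}), and both ultimately produce a basis that is a product of a ``Weyl part'' and a ``relative commutant part''. The difference lies in how orthonormality and completeness of the product family are obtained. The paper factors through the intermediate algebra $\mathcal{R}=N\vee(N'\cap M)$: it lifts the basis of $N'\cap M$ over $\C$ to a basis of $\mathcal{R}$ over $N$ via Popa's non-degenerate commuting square lemma (\Cref{popacommsqrs}), imports a unitary orthonormal basis of $M$ over $\mathcal{R}$ from \cite[Proposition 3.7]{BG}, and multiplies; completeness is then automatic from the two-step factorization, with no counting required. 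You bypass $\mathcal{R}$ and the commuting-square machinery entirely and instead prove the classical ``freeness'' lemma that $E_N(u)=0$ whenever $u\in\mathcal{N}_M(N)$ has nontrivial class in the generalized Weyl group; your intertwining argument for it is the standard one and is correct ($x=E_N(u)$ satisfies $xn=\alpha_u(n)x$, forcing $x^*x,\,xx^*\in\mathcal{Z}(N)=\C$, and $x\neq 0$ would make $\alpha_u$ inner and $[u]=[1]$). Orthonormality of $\{u_iw_j\}$ then follows by direct computation, and completeness from the count $|G|\,\mathrm{dim}(N'\cap M)=[M:N]$ of \Cref{dimn-formula} together with the standard fact that $[M:N]$ mutually orthonormal unitaries $\lambda$ make $\sum\lambda e_1\lambda^*$ a projection of trace one, hence the identity. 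What your route buys is self-containment (no appeal to \cite[Proposition 3.7]{BG} or to the commuting-square transfer); what it costs is the reliance on the index formula for completeness, which the paper's factorization gets for free. The only presentational gap is that you should spell out the final trace-counting step rather than dismissing it as bookkeeping, and note that the commutativity/simplicity hypothesis enters only through the existence of the unitary basis of $N'\cap M$, exactly as in the paper.
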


We will need the following couple of results to achieve this. Recall
that, for a unital inclusion $B \subset A$ of finite dimensional
$C^*$-algebras with inclusion matrix $\Lambda$, a tracial state $\tr$
on $A$ is said to be a Markov trace for $B \subset A$ if
\[
\Lambda^t \Lambda  \bar{t} = \|\Lambda\|^2 \bar{t},
\]
where $\bar{t}$ denotes the trace vector of the tracial state
$\tr$. For more on Markov trace, see \cite{GHJ,JS}.\color{black}
\begin{lemma}\label{markov-u-onb}
Let $A:=\C \oplus \C \oplus
\cdots \oplus \C$ ($n$-copies).\begin{enumerate}
  \item
    The Markov trace  $\tr : A \rar \C$ for the unital inclusion $\C \subset A$ is given by
    \[
    \tr((z_1, \ldots, z_n)) = \frac{1}{n} \sum_i
    z_i.
    \]
\item  There exists a unitary orthonormal basis for $A$ over $\C$ with respect to the Markov trace
if and only if  there exists a unitary matrix $U=[u_{ij}]\in M_n(\C)$ such that $
|u_{ij}| = \frac{1}{\sqrt{n}}$ for all $1 \leq i, j \leq n$.
\end{enumerate}
\end{lemma}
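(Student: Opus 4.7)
The plan is to handle the two parts separately, both via explicit matrix computations. Throughout, identify $A$ with $\C^n$ in the standard way, and let $\{e_1,\ldots,e_n\}$ denote the minimal central projections.

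For part (1), I would begin by writing down the inclusion matrix for $\C \subset A$, which is simply $\Lambda = [1,1,\ldots,1] \in M_{1\times n}(\C)$, giving $\|\Lambda\|^2 = n$ and $\Lambda^t\Lambda$ equal to the $n\times n$ all-ones matrix. Any tracial state on $A$ has the form $\tr(z_1,\ldots,z_n) = \sum_i t_i z_i$ with $t_i \geq 0$ and $\sum_i t_i = 1$, so its trace vector is $\bar t = (t_1,\ldots,t_n)^t$. The Markov equation $\Lambda^t\Lambda\,\bar t = n\,\bar t$ then reads: the vector whose every entry is $\sum_i t_i = 1$ must equal $(nt_1,\ldots,nt_n)^t$. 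This forces $t_i = 1/n$ for every $i$, establishing (1).

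For part (2), I would translate the unitary orthonormal basis condition directly into a matrix condition. A unitary element $v = (v_1,\ldots,v_n) \in A$ is exactly one with $|v_j| = 1$ for all $j$. Suppose $\{v^{(1)},\ldots,v^{(n)}\}$ is a unitary orthonormal basis for $A/\C$ with respect to $\tr$; the orthonormality $\tr(v^{(i)}(v^{(j)})^*) = \delta_{ij}$ becomes
\[
\frac{1}{n}\sum_{l=1}^n v^{(i)}_l\,\overline{v^{(j)}_l} = \delta_{ij}.
\]
Setting $U_{il} := \tfrac{1}{\sqrt n}\,v^{(i)}_l$ yields $\sum_l U_{il}\overline{U_{jl}} = \delta_{ij}$, i.e.\ $UU^* = I$, while $|U_{il}| = \tfrac{1}{\sqrt n}|v^{(i)}_l| = \tfrac{1}{\sqrt n}$. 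Conversely, given such a unitary $U = [u_{ij}]$, defining $v^{(i)} := (\sqrt n\, u_{i1},\ldots,\sqrt n\, u_{in})$ produces unitary elements of $A$ whose pairwise inner products under $\tr$ come out to $\delta_{ij}$ by the same computation in reverse.

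The only remaining point is to verify that orthonormality implies the Pimsner--Popa basis identity. Since $\dim_\C A = n$ and the $n$ elements $v^{(1)},\ldots,v^{(n)}$ are orthonormal with respect to the (faithful) inner product induced by $\tr$, they form an orthonormal basis of $A$ viewed as a Hilbert space; expanding $x \in A$ in this basis gives $x = \sum_i \tr(x(v^{(i)})^*)\,v^{(i)}$, which is exactly the right Pimsner--Popa basis identity over $\C$ with conditional expectation $\tr$. There is no serious obstacle here; the whole lemma is essentially a dictionary between unitary orthonormal bases in $(\C^n,\tr)$ and complex Hadamard-type unitary matrices of size $n$.
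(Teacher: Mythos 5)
Your proposal is correct and follows essentially the same route as the paper: part (2) is exactly the paper's dictionary $u_{ij}\leftrightarrow \frac{1}{\sqrt n}\,v^{(i)}_j$ between unitary orthonormal bases of $(\C^n,\tr)$ and unitaries with all entries of modulus $1/\sqrt n$, with orthonormality encoded as $UU^*=I$ (the paper uses $U^*U=I$, which is equivalent). For part (1) you verify the Markov condition $\Lambda^t\Lambda\,\bar t=\|\Lambda\|^2\bar t$ directly where the paper cites \cite[Proposition 2.7.2]{GHJ}, and you add the (correct, implicitly assumed in the paper) remark that an orthonormal set of cardinality $\dim_\C A$ automatically satisfies the Pimsner--Popa expansion identity.
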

 \begin{proof} 
   (1) This follows easily from \cite[Proposition 2.7.2]{GHJ}. 

   \smallskip
   
 \noindent (2):   $(\Rightarrow)$ Let $\{\lambda_i=(z_i^{(1)}, z_i^{(2)},
   \ldots, z_i^{(n)}) : 1 \leq i \leq n\}$ be a unitary orthonormal
   basis for $A$ over $\C$. Consider the matrix \( U =[u_{ij}] \in M_n
   \) whose entries are given by \( u_{ij}=
   \frac{1}{\sqrt{n}}z_{i}^{(j)} \), i.e., whose $i$-th column
   constitutes of the complex entries in $\lambda_i$. Then,
   $|u_{ij}|=\frac{1}{\sqrt{n}}$ for all $1 \leq i, j \leq n$ and
\[
U^*U= \begin{bmatrix}
\tr(\lambda_1^*\lambda_1) & \tr(\lambda_1^*\lambda_2) & \cdots & \tr(\lambda_1^*\lambda_n)\\
\tr(\lambda_2^*\lambda_1) & \tr(\lambda_2^*\lambda_2) & \cdots & \tr(\lambda_2^*\lambda_n)\\
\vdots & \vdots & \ddots & \vdots \\
\tr(\lambda_n^*\lambda_1) & \tr(\lambda_n^*\lambda_2) & \cdots & \tr(\lambda_n^*\lambda_n)\\
\end{bmatrix}=  I_n.
\]

\noindent $(\Leftarrow)$ Let $U=[u_{ij}]\in U(n)$ be such that $
|u_{ij}| = \frac{1}{\sqrt{n}}$ for all $1 \leq i, j \leq n$. Consider \[
\lambda_i := \sqrt{n}\, (u_{1i}, u_{2i}, \ldots, u_{ni}) \in A,\ i =1 , 2, \ldots, n.
\]
Since
$|u_{ij}| = \frac{1}{\sqrt{n}}$ for all $1 \leq i , j \leq n$, it
follows that $\lambda_i^*\lambda_i=(1, 1, \ldots, 1)$ for all $1 \leq
i \leq n$, i.e., each $\lambda_i$ is a unitary in $A$. 
Further, note that
\[
I_n= U^*U = [u_{ij}]^*[u_{ij}] =
\begin{bmatrix}
\tr(\lambda_1^*\lambda_1) & \tr(\lambda_1^*\lambda_2) & \cdots & \tr(\lambda_1^*\lambda_n)\\
\tr(\lambda_2^*\lambda_1) & \tr(\lambda_2^*\lambda_2) & \cdots & \tr(\lambda_2^*\lambda_n)\\
\vdots & \vdots & \ddots & \vdots \\
\tr(\lambda_n^*\lambda_1) & \tr(\lambda_n^*\lambda_2) & \cdots & \tr(\lambda_n^*\lambda_n)\\
\end{bmatrix}.
\]
Hence, $\tr(\lambda_i^*\lambda_j) = \delta_{i,j}$ for all $ 1 \leq i,
j \leq n$, which implies that $\{\lambda_1, \ldots, \lambda_n\}$ forms
a unitary orthonormal basis for $A$ over $\C$ with respect to above
tracial state.
 \end{proof}

 Recall that a {\it
  unitary error basis} for a matrix algebra $M_n(\C)$ is a Hamel basis
that is orthogonal with respect to the inner product induced by the
canonical trace of $M_n(\C)$. Little is known about their
structure. There are  two popular methods of construction
of unitary error bases. One is algebraic in nature (due to Knill) and the
other combinatorial (due to Werner). 
 
\begin{proposition}\label{unitary-onb}
 Let $A$ be a finite dimensional $C^*$-algebra which is either simple
 or commutative. Then, $A/\C$ has a unitary orthonormal basis with
 respect to the Markov trace for the unital inclusion $\C \subset A$.
\end{proposition}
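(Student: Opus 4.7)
My plan is to handle the two cases separately, using classical explicit constructions in each and invoking \Cref{markov-u-onb} to reduce the commutative case to a purely matrix-theoretic question.

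\emph{Commutative case.} Suppose $A \cong \C^n$ for some $n \geq 1$. By \Cref{markov-u-onb}(1), the Markov trace for $\C \subset A$ is $\tau(z_1,\ldots,z_n)=\frac{1}{n}\sum_i z_i$. By \Cref{markov-u-onb}(2), the task reduces to producing a unitary $U \in M_n(\C)$ all of whose entries have modulus $\frac{1}{\sqrt{n}}$, i.e., a complex Hadamard matrix of order $n$. Such a matrix exists for every $n$: take the normalized Fourier matrix
\[
F = \frac{1}{\sqrt{n}}\bigl[\omega^{(i-1)(j-1)}\bigr]_{i,j=1}^n, \qquad \omega = e^{2\pi i/n},
\]
which is manifestly unitary with every entry of modulus $\frac{1}{\sqrt{n}}$.

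\emph{Simple case.} Suppose $A \cong M_n(\C)$. Since $M_n(\C)$ admits a unique tracial state, the Markov trace for $\C \subset M_n(\C)$ must be the normalized trace $\tau = \frac{1}{n}\tr$. We need $n^2$ unitaries $\{U_\alpha\}$ in $M_n(\C)$ satisfying $\tau(U_\alpha^*U_\beta) = \delta_{\alpha,\beta}$, i.e., a unitary error basis. The classical Weyl (shift-and-multiply) construction supplies one: let $S, D \in M_n(\C)$ be the cyclic shift and clock unitaries determined on the standard basis by $Se_i = e_{(i+1)\bmod n}$ and $De_i = \omega^i e_i$, and set $U_{j,k} := S^j D^k$ for $0 \leq j, k \leq n-1$. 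A short calculation using the Weyl relation $DS = \omega SD$ together with $\tr(S^a D^b) = n\,\delta_{a,0}\,\delta_{b,0}$ (indices read mod $n$) yields $\tau(U_{j,k}^* U_{j',k'}) = \delta_{j,j'}\delta_{k,k'}$, so $\{U_{j,k}\}$ is the desired unitary orthonormal basis.

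Both constructions are entirely classical, and I do not anticipate any genuine obstacle: the commutative case is handed to us by \Cref{markov-u-onb} together with the existence of the Fourier matrix, while the simple case reduces to the well-known existence of a unitary error basis in $M_n(\C)$, produced explicitly via the Heisenberg--Weyl pair $(S,D)$. The only minor point to keep in mind is the identification of the Markov trace as the normalized trace in each case, which is immediate from \Cref{markov-u-onb}(1) in the commutative setting and from uniqueness of traces on $M_n(\C)$ in the simple setting. Assembling the two cases completes the proof.
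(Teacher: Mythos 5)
Your proof is correct and follows essentially the same route as the paper: the commutative case via \Cref{markov-u-onb} and the Fourier (DFT) matrix, and the simple case via the Weyl clock-and-shift unitary error basis $\{S^jD^k\}$ (the paper's Sylvester--Weyl matrices $U,V$). The only cosmetic difference is that you explicitly note the Markov trace on $M_n(\C)$ is the unique normalized trace, which the paper leaves implicit.
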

\begin{proof}
 Suppose first that $A=M_n(\C)$ for some $n \geq 2$. The existence of
 a unitary orthonormal basis follows from the known construction of a
 unitary error basis. We include the
 details for the reader's convenience.

 We first recall such a basis for $n=2$ (because of its importance and
 popularity in quantum information theory).  The Pauli spin
 matrices (unitary error bases in dimension $2$) are defined as
 follows:
$$\sigma_x=\begin{bmatrix}
           0 & 1\\
           1 & 0
          \end{bmatrix},
          \sigma_y=\begin{bmatrix}
                    0 & -i\\
                    i & 0
                   \end{bmatrix},
                   \sigma_z=\begin{bmatrix}
                             1 & 0\\
                             0 & -1
                            \end{bmatrix}.
$$ It is an amazing fact that the set
                   $\{\mathrm{I}_2,\sigma_x,\sigma_y,\sigma_z\}$ forms
                   an orthonormal basis consisting of unitaries for
                   $M_2(\C).$
  
  For higher dimensions, consider the following two important matrices
  due to Sylvester and Weyl:
  \begin{equation*}
 U=
 \begin{bmatrix}
  1 & 0 & 0 & \cdots & 0\\
  0 & \omega & 0 &  \cdots & 0\\
  0 & 0 & {\omega}^2 & \cdots & 0\\
  \vdots & \vdots & \vdots & \ddots & \vdots\\
  0 & 0 & 0 & \cdots & {\omega}^{n-1}
 \end{bmatrix}
\end{equation*}
and
\begin{equation*}
V= 
\begin{bmatrix}
0 & 0 & 0 &   \cdots & 0  & 1\\
1 & 0 & 0 &   \cdots & 0  & 0 \\
0 & 1 & 0 &  \cdots  & 0  & 0\\
0 & 0 & 1 &  \cdots & 0 & 0\\
\vdots  & \vdots  & \vdots & \ddots & \vdots & \vdots\\
0 & 0 & 0 & \cdots & 1 &  0 
\end{bmatrix},
\end{equation*}
where $\omega := e^{-2\pi i /n}$ (a primitive root
   of unity). Then, it is known that the set $\{U^iV^j:
1\leq i,j\leq n\}$ forms a unitary error basis (in fact, a nice error
basis) for $M_n(\C)$.  These matrices also appeared in a work of Popa (\cite{pop}) (see also \cite{S}).  This
proves that $A/\C$ has unitary orthonormal basis whenever $A$ is
simple.

   Next, let $A$ be isomorphic to $\C\oplus \C \oplus \cdots \oplus
   \C$ ($n$-copies). Now, for a primitive root of unity $\omega$ as above,
   consider the well-known unitary DFT matrix
  \[
  U := \frac{1}{\sqrt{n}}
  \begin{bmatrix}
1 & 1 & 1 & 1& \cdots & 1\\
1 & \omega & \omega^2 & \omega^3 & \cdots & \omega^{n-1}\\
1& \omega^2 & \omega^4 & \omega^6 & \cdots & \omega^{2(n-1)}\\
1& \omega^3 & \omega^6 & \omega^9 & \cdots & \omega^{3(n-1)}\\
\vdots & \vdots & \vdots & \vdots& \ddots & \vdots\\
1& \omega^{n-1} & \omega^{2(n-1)} & \omega^{3(n-1)} & \cdots & \omega^{(-1)(n-1)}
    \end{bmatrix}.
  \]
  Clearly, each entry of $U$ has modulus $1/\sqrt{n}$. So,  by \cref{unitary-onb}, there exists a
  unitary orthonormal basis for $A/\C$ with respect to the Markov
  trace.
\end{proof}

The preceding observation will prove to be very crucial in the next
section. Thus, it seems it is worthwhile
to investigate in detail the existence of unitary basis of the finite
dimensional inclusions $B\subset A$, which, in turn, may prove to be
useful in answering the question of Popa for hyperfinite irreducible
subfactors.

\begin{lemma}\label{markov}
Let $N \subset M$ be a finite index regular subfactor of type
$II_1$. Then, ${\tr_M}_{|_{N'\cap M}}$ is the Markov trace for the
inclusion $\C \subset N'\cap M$.
  \end{lemma}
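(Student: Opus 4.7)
The plan is to exploit \Cref{scalar-index}, which furnishes scalarity of the Watatani index of $\tr_M|_{N'\cap M}$, and to convert that scalarity into the numerical Markov condition on the trace weights.

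First I would decompose $N'\cap M \cong \bigoplus_{i=1}^k M_{n_i}(\C)$ and set $t_i := \tr_M(p_i)$, where $p_i$ is a minimal projection in the $i$-th summand. The inclusion matrix for $\C \subset N'\cap M$ is then the row vector $\Lambda = [n_1, \ldots, n_k]$, so $\|\Lambda\|^2 = \sum_i n_i^2 = \dim_{\C}(N'\cap M)$, and the trace vector is $\bar{t} = (t_1, \ldots, t_k)^T$.

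Next, I would compute the Watatani index of the conditional expectation $\tr_M|_{N'\cap M}: N'\cap M \to \C$ using the obvious matrix-unit quasi-basis: the collection $\{(e_{jk}^{(i)}, t_i^{-1} e_{kj}^{(i)}) : 1\leq i \leq k,\ 1\leq j,k\leq n_i\}$ gives
\[
\mathrm{Ind}\bigl(\tr_M|_{N'\cap M}\bigr) \;=\; \sum_{i,j,k} t_i^{-1}\, e_{jk}^{(i)} e_{kj}^{(i)} \;=\; \sum_i \frac{n_i}{t_i}\, 1_i,
\]
where $1_i$ denotes the unit of the $i$-th summand. Invoking \Cref{scalar-index}, this element of the center $\mz(N'\cap M) = \bigoplus_i \C \cdot 1_i$ must in fact be a positive scalar, so $n_i/t_i$ is independent of $i$. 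Denoting the common value by $\beta$, the normalization $\sum_i n_i t_i = \tr_M(1) = 1$ pins down $\beta = \sum_i n_i^2 = \|\Lambda\|^2$, and hence $t_i = n_i/\|\Lambda\|^2$.

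Finally, I would read off the Markov condition by a one-line check: the $i$-th coordinate of $\Lambda^T\Lambda \bar{t}$ equals $n_i \sum_j n_j t_j = n_i = \|\Lambda\|^2 t_i$, so $\Lambda^T\Lambda \bar{t} = \|\Lambda\|^2 \bar{t}$, which is precisely the defining relation of the Markov trace (with modulus $\|\Lambda\|^2$) for $\C \subset N'\cap M$. I do not foresee any genuine obstacle: once \Cref{scalar-index} is granted, the entire argument is a short bookkeeping exercise with matrix units. The only mild subtlety is that ``scalar Watatani index'' must be interpreted as a positive multiple of the identity in $\mz(N'\cap M)$, which is automatic since the Watatani index is always a positive central element.
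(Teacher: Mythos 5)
Your proposal is correct and follows the same route as the paper: both arguments rest entirely on \Cref{scalar-index} (scalarity of the Watatani index of ${\tr_M}|_{N'\cap M}$) and then convert that scalarity into the Markov condition for $\C \subset N'\cap M$. The only difference is that the paper outsources the conversion step to citations of \cite[Corollary 2.4.3]{W} and \cite[Proposition 3.2.3]{JS}, whereas you carry it out explicitly via the matrix-unit quasi-basis, computing $\mathrm{Ind}({\tr_M}|_{N'\cap M}) = \sum_i (n_i/t_i)\,1_i$ and deducing $t_i = n_i/\|\Lambda\|^2$; your computation is correct and makes the lemma self-contained.
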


\begin{proof} 
 As pointed out in \Cref{scalar-index},  it can be extracted from the proof of
 \cite[Theorem 3.12]{BG} that the Watatani index
 (\cite{W}) $\text{Ind}(\tr_M)$ of $\tr_M$ is a scalar. Thus, it follows from
 \cite[Corollary 2.4.3]{W} and \cite[Proposition 3.2.3]{JS} that
 ${\tr_M}_{|_{N'\cap M}}$ is indeed the Markov trace for the inclusion
 $\C\subset N^{\prime}\cap M$.
\end{proof}  
\smallskip

\noindent{\it Proof of \Cref{regularunitary}:} Consider the
intermediate von Neumann subalgebra $\mathcal{R}:=N\vee
(N^{\prime}\cap M)$. Then, as in the proof of \cite[Lemma 3.4]{BG}, we
see that $(\C,N^{\prime}\cap M, N,\mathcal{R})$ is a non-degenerate
commuting square.  By \Cref{markov}, ${\tr_M}_{|_{N'\cap M}}$ is the Markov
trace for $\C \subset N'\cap M$; so, by \Cref{unitary-onb}, there
exists a unitary orthonormal basis, say, $\{u_i:i\in I\}$ for
$N^{\prime}\cap M$ over $\C$.  Then, by \Cref{popacommsqrs}, $\{ u_i :
i \in I\}$ is a unitary orthonormal basis for $\mathcal{R}/N$ as well.

On the other hand, since $N \subset M$ is regular, from
\cite[Proposition 3.7]{BG}, we know that $M/\mathcal{R}$ also has a
unitary orthonormal basis, say, $\{v_j:j\in J\}$. We assert that
$\{v_ju_i:i\in I, j\in J\}$ is a unitary orthonormal basis for
$M/N$. It is easy to see that $\{v_ju_i:i\in I~~\text{and}~~j\in J\}$
is a Pimsner-Popa basis for $M/N$. Also,
$$
E^M_N(u^*_i v_j^*v_ku_l)=E^{\mathcal{R}}_N\circ
E^M_{\mathcal{R}}(u^*_iv_j^*v_ku_l)=
E^{\mathcal{R}}_N\big(u^*_iE^M_{\mathcal{R}}(v^*_jv_k)u_l\big)=\delta_{j,k}\delta_{i,l}.
$$
Thus, $\{v_ju_i:i\in I~~\text{and}~~j\in J\}$ is a unitary
orthonormal basis for $M/N$.  \hfill $\Box$

\subsection{Two-sided basis versus unitary orthonormal basis}\( \)

Some preliminary observations suggest that the above questions of
Jones (Question \ref{Jones}) and Popa (Question \ref{popa}) may be
intimately interrelated in the case of integer index (extremal)
subfactors. Below, we illustrate some such connections.

The following fact is implicit in \cite{S}.
\begin{lemma}\cite{S}\label{S}
Let $N\subset M$ be a subfactor of finite index.  If $M/N$ has a  unitary
orthonormal basis, then $M_1/M$ admits a two-sided unitary
orthonormal basis.

In particular, $N \subset M$ is extremal.
\end{lemma}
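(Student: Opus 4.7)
The plan is to use the Jones projection $e_1 = e_N^M$ to lift the given unitary orthonormal basis $\{u_1,\ldots,u_n\}$ of $M/N$ (where $n = [M:N] = [M_1:M]$) to a family of pairwise orthogonal projections inside $M_1$, and then to apply a complex Hadamard matrix to repackage them as the desired family of unitaries in $M_1$ that is orthonormal with respect to $E_M$.

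Concretely, I would first set $p_i := u_i e_1 u_i^* \in M_1$ and observe that $\{p_i\}_{i=1}^n$ is a partition of unity by orthogonal projections: the right-basis property gives $\sum_i p_i = 1$, while the orthonormality $E_N(u_i^* u_j) = \delta_{ij}$ gives $p_i p_j = u_i E_N(u_i^* u_j) e_1 u_j^* = \delta_{ij} p_i$. More globally, the $e_{ij} := u_i e_1 u_j^*$ form a system of matrix units for an embedded unital copy of $M_n(\C)$ inside $M_1$, with $\{p_i\}$ as its diagonal.

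Next, I would fix any $n \times n$ complex Hadamard matrix $H = [H_{ij}]$ (for instance, the DFT matrix already used in \Cref{unitary-onb}) and define
\[
v_k \;:=\; \sqrt{n}\,\sum_{i=1}^n H_{ik}\,p_i, \qquad k = 1, \ldots, n.
\]
Since $|H_{ik}| = 1/\sqrt{n}$ and the columns and rows of $H$ are orthonormal, the identities $p_i p_j = \delta_{ij} p_i$ and $\sum_i p_i = 1$ immediately give $v_k^* v_k = v_k v_k^* = 1$, so each $v_k$ is unitary in $M_1$. Using the standard identity $E_M(e_1) = 1/n$ (so $E_M(p_i) = u_i E_M(e_1) u_i^* = 1/n$), a short computation then yields
\[
E_M(v_k^* v_l) \;=\; n \sum_i \overline{H_{ik}} H_{il}\, E_M(p_i) \;=\; \sum_i \overline{H_{ik}} H_{il} \;=\; \delta_{kl},
\]
and symmetrically $E_M(v_k v_l^*) = \delta_{kl}$. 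Hence $\{v_k\}$ is an orthonormal family of $n = [M_1:M]$ unitaries; a standard trace argument (the $v_k e_M^{M_1} v_k^*$ are mutually orthogonal projections in $M_2$ of total trace $1$, hence summing to $1$, with an analogous identity on the other side) then promotes it to a two-sided Pimsner--Popa basis of $M_1$ over $M$.

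Finally, the extremality assertion is immediate from the hypothesis itself: unitarity of each $u_i$ forces $\sum_i u_i u_i^* = n \cdot 1 = [M:N] \cdot 1$, and the scalarity of $\sum_i \lambda_i \lambda_i^*$ for an orthonormal basis $\{\lambda_i\}$ is a standard equivalent of extremality. I anticipate no substantive obstacle in this scheme; the only nontrivial ingredient beyond bookkeeping is the Hadamard trick, which is precisely what converts the commuting partition of unity $\{p_i\}$ into a family of unitaries whose orthonormality against $E_M$ is forced by the row/column orthogonality of $H$.
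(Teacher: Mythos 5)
Your proposal is correct and follows essentially the same route as the paper: both pass to the orthogonal projections $u_ie_1u_i^*$ summing to $1$ and apply a discrete Fourier (Hadamard) transform to produce the two-sided unitary orthonormal basis $\{v_k\}$ of $M_1$ over $M$, your only generalization being that any complex Hadamard matrix works in place of the Vandermonde matrix of roots of unity. Your extremality argument (scalarity of $\sum_i u_iu_i^* = [M:N]\cdot 1$) is a slightly more direct variant of the paper's appeal to the extremality of $M\subset M_1$, and both are valid.
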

\begin{proof}
This proof is extracted verbatim from \cite{S}. Suppose $\{\lambda_i:
1 \leq i \leq n\}$ is a unitary orthonormal basis for $M/N$. Thus,
$\sum_i \lambda_ie_1\lambda^*_i=1.$ Now, put
$$
v_k=\sum_{i=0}^{n-1} {\omega}^{ki} \lambda_ie_1\lambda^*_i, 0 \leq k
\leq n-1,
$$
where $\omega$ is an $n$-th root of unity. In
\cite[Proposition 3.24]{S}, it has been shown that $\{v_k\}$ is a
unitary orthonormal basis for $M_1/M$. Clearly this is two-sided.

Next, recall that $N\subset M$ is extremal if and only if $M\subset
M_1$ is extremal - see, for instance, \cite{pop2}. Since $M_1/M$ has a two-sided basis,
it is easily seen (see \cite{BG}) that it is extremal.
\end{proof}

\begin{proposition}
 Let $N \subset M$ be a finite index hyperfinite subfactor of type
 $II_1$ with finite depth. If $M/N$ has a unitary orthonormal basis,
 then it also has a two-sided unitary orthonormal basis.
\end{proposition}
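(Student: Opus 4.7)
My plan is to iterate \Cref{S} twice and then transport the resulting two-sided unitary orthonormal basis back to $M/N$ via Popa's classification of hyperfinite finite-depth subfactors. First, since $M/N$ is assumed to have a unitary orthonormal basis, \Cref{S} immediately gives two things: $N\subset M$ is extremal, and $M_1/M$ admits a two-sided unitary orthonormal basis. In particular, $M_1/M$ itself has a unitary orthonormal basis.

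At that point I would apply \Cref{S} once more, now to the inclusion $M\subset M_1$, whose required hypothesis is supplied by the previous step. This produces a two-sided unitary orthonormal basis for $M_2/M_1$.

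The crucial observation I would then invoke is that, under the standing hypotheses, the inclusions $N\subset M$ and $M_1\subset M_2$ are isomorphic as inclusions of $II_1$-factors. Both are extremal, hyperfinite and of finite depth; moreover, the canonical shift isomorphism $N^{\prime}\cap M_k \xrightarrow{\sim} M_1^{\prime}\cap M_{k+2}$ (available for extremal subfactors via Jones' basic construction) identifies their standard $\lambda$-lattices. Popa's classification theorem for strongly amenable subfactors---which covers the hyperfinite finite-depth case, as finite depth forces strong amenability---then delivers an isomorphism $M_1\subset M_2\cong N\subset M$. Pulling back the two-sided unitary orthonormal basis of $M_2/M_1$ along this isomorphism yields the desired two-sided unitary orthonormal basis for $M/N$.

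The main obstacle is this final step: invoking Popa's classification is a deep black-box input, and both hyperfiniteness and the finite-depth hypothesis are used in an essential way. A more elementary, hands-on construction of a two-sided unitary basis starting from a given one-sided one seems to require substantially more structure (such as irreducibility or regularity, as exploited in the earlier sections of this paper).
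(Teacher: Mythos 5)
Your proposal is correct and follows essentially the same route as the paper: apply \Cref{S} twice to obtain a two-sided unitary orthonormal basis for $M_2/M_1$, note that $N\subset M$ and $M_1\subset M_2$ have isomorphic standard invariants, and invoke Popa's classification of hyperfinite finite-depth subfactors to transport that basis back to $M/N$. The only cosmetic difference is that you spell out the shift isomorphism of higher relative commutants, which the paper leaves as a "known" fact.
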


\begin{proof}
By \Cref{S}, it follows that $M_1/M$, and hence, $M_2/M_1$ has a
two-sided unitary orthonormal basis.  It is known that the standard
invariants of the extremal subfactors $N \subset M$ and $M_1 \subset
M_2$ are isomorphic.  Thus, by Popa's classification result (see
\cite{pop2}), $N \subset M$ and $M_1 \subset M_2$, both being
hyperfinite, are isomorphic. Hence, $N \subset M$ has a two-sided
basis.  This completes the proof.
 \end{proof}

It will be good to know an answer of the following natural question.
 \begin{question}
  If $N\subset M$ is a finite depth integer index subfactor of type
  $II_1$, then is it true that $M/N$ has a unitary orthonormal basis if
  and only if $M/N$ has a two-sided Pimsner-Popa basis?
 \end{question}

 \begin{remark}
  Note that even if it can be shown that a finite index subfactor with a
  two-sided basis also admits a unitary orthonormal basis, then in
  view of \cite{BG}, it will follow that \Cref{regular-u-onb} holds
  true.
  \end{remark}

 \section{Regular subfactors and weak Kac algebras}
As recalled in the introduction, a finite index irreducible regular
inclusion of $II_1$-factors is always of the form $N \subset N \rtimes
G$ with respect to an outer action of a finite group $G$.  It is then
natural to ask what happens if we drop the irreducibility condition.

\begin{remark}\label{gap}

 Employing Szyma\'{n}ski's characterization of depth $2$ (irreducible)
 subfactors, Ceccherini-Silberstein (in \cite[Theorem 4.6]{S}) asserted that
 every finite index regular subfactor $N \subset M$ of type $II_1$ is
 of the form $N \subset N \rtimes H$ with respect to an outer action
 of a finite dimensional Hopf $*$-algebra $H$ on $N$. However, it had the
 following obvious oversight:\smallskip

 If his assertion  is true, then it will automatically force
 $N\subset M$ to be irreducible, whereas he has claimed to
 have characterized regular subfactors sans irreducibility.\medskip

In fact, Ceccherini-Silberstein's oversight stems from  an incomplete
  proof of an assertion made in \cite[Theorem 4.5]{S}, as explained
below:\smallskip

In the proof of \cite[Theorem 4.6]{S}, in view of \cite[Theorem
  4.5]{S}, a unitary orthonormal basis $\{\lambda_i\}$ is chosen
  for $M/N$ and then it is deduced that $$ N^{\prime}\cap
  M_1=\text{Alg}\{\lambda_ie_1\lambda^*_i:i\in I\}.
$$ Note that, the family $\{\lambda_ie_1\lambda^*_i\}$ consists of
  mutually orthogonal projections with $\sum_i
  \lambda_ie_1\lambda^*_i= 1$.  Thus, if $N\subset M$ is regular with
  finite index, then according to \cite[Theorem 4.6]{S},
  $N^{\prime}\cap M_1$ is always commutative. However, this is known to be
  untrue. For instance, taking an irreducible regular subfactor
  $K\subset L$ and putting $N=\C\otimes K$ and $M=M_n(\mathbb
  C)\otimes L$, it can be seen that $N\subset M$ is regular (see
  \Cref{tensor}) with integer index and $N^{\prime}\cap M \cong
  M_n(\mathbb C)$; so that $N'\cap M_1$ is not commutative.\smallskip
\end{remark}

So, the
question of characterizing (finite index) regular subfactors of type
$II_1$ is still unresolved.\color{black}

\begin{lemma}\label{tensor}
 Let $N\subset M$ be a regular inclusion of von Neumann
 algebras. Then, $\C \otimes N\subset M_n \otimes M$ is also regular.
\end{lemma}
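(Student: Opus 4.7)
The plan is to exhibit enough normalizers of $\C\otimes N$ inside $M_n\otimes M$ to generate everything. Concretely, I would show that both $M_n(\C)\otimes 1$ and $1\otimes M$ lie in the double commutant of $\mathcal{N}_{M_n\otimes M}(\C\otimes N)$, and then invoke the standard fact that $M_n(\C)\otimes M$ is generated as a von Neumann algebra by $M_n(\C)\otimes 1$ together with $1\otimes M$.

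First, I would observe that any $u\in\mathcal{U}(M_n(\C))$ gives rise to a unitary $u\otimes 1\in M_n\otimes M$ which centralizes $\C\otimes N$, since $(u\otimes 1)(1\otimes x)(u^*\otimes 1)=1\otimes x$ for all $x\in N$. In particular $u\otimes 1\in\mathcal{N}_{M_n\otimes M}(\C\otimes N)$. Since $\mathcal{U}(M_n(\C))$ linearly spans $M_n(\C)$, the normalizers of this form already generate $M_n(\C)\otimes 1$ as a von Neumann algebra.

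Next, for any $v\in\mathcal{N}_M(N)$, the unitary $1\otimes v$ satisfies $(1\otimes v)(1\otimes x)(1\otimes v^*)=1\otimes vxv^*\in 1\otimes N=\C\otimes N$, so $1\otimes v\in\mathcal{N}_{M_n\otimes M}(\C\otimes N)$. By the regularity hypothesis on $N\subset M$, the set $\mathcal{N}_M(N)$ generates $M$ as a von Neumann algebra, hence $\{1\otimes v:v\in\mathcal{N}_M(N)\}''=1\otimes M$.

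Combining the two inclusions, $\mathcal{N}_{M_n\otimes M}(\C\otimes N)''$ contains both $M_n(\C)\otimes 1$ and $1\otimes M$, and therefore contains the von Neumann algebra they jointly generate, which is $M_n(\C)\otimes M$. This proves regularity of $\C\otimes N\subset M_n\otimes M$. There is no real obstacle here; the only point requiring minor care is the assertion that $M_n(\C)\otimes 1$ and $1\otimes M$ generate the spatial tensor product as a von Neumann algebra, which is standard.
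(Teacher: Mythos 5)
Your proof is correct and follows essentially the same route as the paper: both identify the unitaries $u\otimes v$ (with $u\in U(n)$ and $v\in\mathcal{N}_M(N)$) as normalizers of $\C\otimes N$ and then pass to the generated von Neumann algebra, the only cosmetic difference being that you split into $M_n(\C)\otimes 1$ and $1\otimes M$ while the paper approximates a general simple tensor $u\otimes z$ in the weak operator topology.
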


\begin{proof}
Note that $\{ u \otimes v: u \in U(n), v \in \mathcal{N}_M(N)\}
\subseteq \mathcal{N}_{M_n \otimes M}(\C \otimes N)$. Thus,
\[
[*\text{-alg}\, U(n)] \otimes [*\text{-alg}\, \mathcal{N}_M(N) ] \subseteq
*\text{-alg}\, \mathcal{N}_{M_n \otimes M} (\C \otimes N).
\]

It is enough to show that
\[
\{ u \otimes z: u \in U(n), z \in   M\} 
\subset \mathcal{N}_{M_n \otimes M} (\C \otimes N)''.
\]
Let $z \in M$ and $u \in U(n)$. Then, there exists a net $\{x_i\}$ in
$*\text{-alg}\, \mathcal{N}_M(N)$ such that $x_i
\stackrel{\mathrm{WOT}}{\longrightarrow} z$. Thus, $\{u \otimes x_i\}$ is a net
in $ [*\text{-AC}\, U(n)] \otimes [*\text{-alg}\, \mathcal{N}_M(N)
]$, which is a $*$-subalgebra of $*\text{-alg}\, \mathcal{N}_{M_n
  \otimes M} (\C \otimes N)$. Also, $u \otimes x_i
\stackrel{\mathrm{WOT}}{\longrightarrow} u \otimes z$. Hence, $u
\otimes z \in \left(*\text{-alg}\, \mathcal{N}_{M_n \otimes M} (\C
\otimes N)\right)''$.
\end{proof}

\begin{theorem}\label{cute}
   Let $N\subset M$ be a finite index regular inclusion of type
   $II_1$-factors such that $N^{\prime}\cap M$ is either simple or
   commutative.  Then, $N\subset M$ is of depth at most $2$.
\end{theorem}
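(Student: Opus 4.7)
My plan is to show that the tower $N'\cap M \subset N'\cap M_1 \subset N'\cap M_2$ is an instance of Jones' basic construction, which by \Cref{depth-defn} forces the depth of $N\subset M$ to be at most $2$. Concretely, since $\tr_{M_\bullet}|_{N'\cap M_\bullet}$ is Markov at each level (a consequence of \Cref{scalar-index}), \cite[Theorem 4.6.3]{GHJ} reduces the task to verifying the index identity $\|\Lambda\|^2 = [M:N]$, where $\Lambda$ denotes the inclusion matrix of $N'\cap M \subset N'\cap M_1$.

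The starting point is \Cref{regularunitary}, which produces a unitary orthonormal basis for $M$ over $N$. Inspecting its proof, this basis can be arranged in the explicit form $\{v_g u_i\}_{g\in G,\, i\in I}$, where $\{v_g\}_{g\in G}$ is a family of normalizers of $N$ in $M$ representing the generalized Weyl group $G$ (as in \cite[Proposition 3.7]{BG}) and $\{u_i\}_{i\in I}$ is a unitary orthonormal basis of $N'\cap M$ over $\C$ with respect to the Markov trace (provided by \Cref{unitary-onb} together with \Cref{markov}). The crucial point is that \emph{every basis element} $w = v_g u_i$ normalizes $N$, since every element of $N'\cap M$ commutes pointwise with $N$ and every $v_g$ is a normalizer by construction.

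Consequently $p_w := w e_1 w^*$ lies in $N'\cap M_1$ for every basis element $w$, and the Pimsner-Popa identity $\sum_w p_w = 1$, combined with the elementary fact that projections summing to $1$ are automatically pairwise orthogonal, produces $[M:N]$ many mutually orthogonal projections of common trace $[M:N]^{-1}$ inside $N'\cap M_1$. Moreover, the standard Jones projection $e_2 \in M_2$ for $N\subset M$ already lies in $N'\cap M_2$, has trace $[M:N]^{-1}$, and satisfies $e_2(N'\cap M_1)e_2 = (N'\cap M)e_2$ (using $E_M(N'\cap M_1) \subseteq N'\cap M$), so it plays the role of the putative Jones projection. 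Using the identity $[M:N] = |G|\,\dim(N'\cap M)$ established in \cite[Theorem 3.12]{BG}, it then remains to analyse the bimodule structure of $N'\cap M_1$ over $N'\cap M$ through the conjugation action of the normalizers $v_g$, which permutes (respectively, transforms) the minimal central projections of $N'\cap M$ when the latter is commutative (respectively, simple), so as to pin down the multiplicities of $\Lambda$ and derive $\|\Lambda\|^2 = |G|\,\dim(N'\cap M)$.

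\textbf{The main obstacle} lies in this last step: the orthogonal partition $\{p_w\}$ alone only gives $\dim(N'\cap M_1) \geq [M:N]$, and to reach the exact norm equality one must track how the combined bimodule action of $N'\cap M$ and the Weyl group $G$ on $N'\cap M_1$ refines its decomposition into central summands. This is the place where the dichotomy between simple and commutative relative commutant is genuinely handled case-by-case, and where the hypothesis on $N'\cap M$ becomes indispensable.
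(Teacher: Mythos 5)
Your first half coincides exactly with the paper's argument: invoke \Cref{regularunitary} to get a unitary orthonormal basis $\{w_{i,j}=v_ju_i\}$ of $M$ over $N$ consisting of normalizers of $N$, and observe that the projections $w_{i,j}e_1w_{i,j}^*$ lie in $N'\cap M_1$ and sum to $1$. But from that point on your route diverges and, as you yourself concede, does not close. You propose to verify depth $\le 2$ by computing the inclusion matrix $\Lambda$ of $N'\cap M\subset N'\cap M_1$ and proving $\|\Lambda\|^2=[M:N]=|G|\,\dim(N'\cap M)$ via a case-by-case analysis of how the Weyl group permutes central summands; the paragraph labelled ``the main obstacle'' is precisely the missing proof, so what you have written is a plan, not an argument. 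Moreover the orthogonal family $\{p_w\}$ by itself only bounds $\dim(N'\cap M_1)$ from below, and a dimension count of $N'\cap M_1$ does not by itself determine $\|\Lambda\|^2$; pinning down the multiplicities in $\Lambda$ is genuinely nontrivial and is exactly what the paper's proof is designed to avoid.

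The paper closes the argument with a different and much shorter device, using another of the equivalent conditions in \cite[Theorem 4.6.3]{GHJ}: the tower is a basic construction iff $(N'\cap M_1)e_2(N'\cap M_1)=N'\cap M_2$. One checks the single identity
\[
(w_{i,j}e_1w_{i,j}^*)\,e_2\,(w_{i,j}e_1w_{i,j}^*)=\tau\, w_{i,j}e_1w_{i,j}^*,
\]
which places each $w_{i,j}e_1w_{i,j}^*$ inside the two-sided ideal $(N'\cap M_1)e_2(N'\cap M_1)$ of $N'\cap M_2$; since these projections sum to $1$, the ideal contains $1$ and hence equals $N'\cap M_2$. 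No inclusion matrices, no Markov-trace verification at higher levels, and no case split are needed past the appeal to \Cref{regularunitary} -- in particular your diagnosis that the simple/commutative hypothesis is ``genuinely handled case-by-case'' in the final step is off: it enters only through the existence of the unitary orthonormal basis, and the rest of the proof is hypothesis-free. I would encourage you to replace your unfinished multiplicity analysis with this ideal argument.
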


\begin{proof}
  Note that, by \Cref{regularunitary}, $M$ admits a unitary
  orthonormal basis over $N$. More precisely, taking $\mathcal{R}:=N
  \vee (N'\cap M)$, we saw that $\mathcal{R}/N$ admits a unitary
  orthonormal basis, say, $\{u_i:i\in I\}\subset
  \mathcal{U}(N^{\prime}\cap M)\subset \mathcal{N}_M(N)$;
  $M/\mathcal{R}$ admits a unitary orthonormal basis $\{v_j:j\in
  J\}\subset \mathcal{N}_M(N)$; and then, taking $w_{i,j}=v_ju_i$ we
  saw that $\{w_{i,j}:i\in I, j\in J\}$ is a unitary orthonormal basis
  for $M/N$ and $\{w_{i,j}: (i, j) \in I \times J\}\subset
  \mathcal{N}_M(N).$ In particular, we have $\sum_{i,j}
  w_{i,j}e_1w^*_{i,j}=1.$

 Now, note that for any unitary $u\in N$ we have
 $w^*_{i,j}uw_{i,j}=v_{i,j}$ for some unitary $v_{i,j}\in
 N$. Thus,
 \[
 u(w_{i,j}e_1w^*_{i,j})u^*=w_{i,j}v_{i,j}e_1v^*_{i,j}w^*_{i,j}=
 w_{i,j}e_1w^*_{i,j}.
 \]
 This implies that  $w_{i,j}e_1w^*_{i,j}\in N^{\prime}\cap
 M_1$ for all $(i,j)\in I\times J$. Further, we readily see that
 \[
 (w_{i,j}e_1w^*_{i,j})e_2(w_{i,j}e_1w^*_{i,j})=\tau
 w_{i,j}e_1w^*_{i,j} \ \forall\ (i, j) \in I \times J.
 \]
 Hence, $w_{i,j}e_1w^*_{i,j}\in (N^{\prime}\cap M_1)e_2
 (N^{\prime}\cap M_1)$ for every $(i,j)\in I\times J$. Now, since
 $1=\sum_{i,j} w_{i,j}e_1w^*_{i,j}$ and that $(N^{\prime}\cap
 M_1)e_2(N^{\prime}\cap M_1)$ is an ideal in $N^{\prime}\cap M_2$, it
 follows that that $(N^{\prime}\cap M_1)e_2 (N^{\prime}\cap
 M_1)=N^{\prime}\cap M_2.$ Thus, in view of \cite[Theorem 4.6.3]{GHJ}, $N \subset M$ has depth at most $2$.
\end{proof}

\begin{corollary}\label{abelianrelativecommutant}
  If $N\subset M$ is a finite index regular inclusion of  type $II_1$ factors
  such that $N^{\prime}\cap M$ is commutative, then it has depth $2$.
\end{corollary}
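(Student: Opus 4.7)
The proof I would offer is extremely short: the corollary is essentially a specialization of Theorem \ref{cute}. Since the theorem covers both the simple and the commutative relative commutant cases, and the commutative case is what the corollary assumes, the theorem immediately gives that $N\subset M$ has depth at most $2$. So the only thing to argue is that in the genuinely nontrivial setting we cannot have depth strictly less than $2$.

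My plan would be as follows. First, invoke Theorem \ref{cute} to conclude that $N'\cap M_2 = (N'\cap M_1) e_2 (N'\cap M_1)$, i.e., depth is at most $2$. Second, rule out depth $1$: a depth-$1$ inclusion of $II_1$-factors forces $[M:N]=1$, hence $N=M$, in which case the conclusion is vacuous (or one may just note that a $II_1$-subfactor with $[M:N]>1$ automatically has depth at least $2$). Combining, the depth is exactly $2$. Alternatively, the inclusion $N'\cap M\subset N'\cap M_1$ must be connected (by finiteness of index and factoriality), and the standard square norm/dimension argument from \cite[Theorem 4.6.3]{GHJ} together with the equality $N'\cap M_2 = (N'\cap M_1) e_2 (N'\cap M_1)$ produced in the proof of Theorem \ref{cute} confirms that the tower $N'\cap M\subset N'\cap M_1\subset N'\cap M_2$ is exactly an instance of basic construction.

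There is really no obstacle here: the substantive work was done in Theorem \ref{cute}, whose proof built a unitary orthonormal basis $\{w_{i,j}\}$ out of the two pieces $\mathcal{R}/N$ and $M/\mathcal{R}$ (using that the relative commutant is commutative, hence admits a unitary orthonormal basis over $\C$ via Proposition \ref{unitary-onb} and Lemma \ref{markov}), normalized $N$ in $M$, and used the identity $\sum_{i,j} w_{i,j} e_1 w_{i,j}^* = 1$ together with invariance under conjugation by unitaries of $N$ to place each $w_{i,j} e_1 w_{i,j}^*$ inside $(N'\cap M_1) e_2 (N'\cap M_1)$. The corollary just reads off this conclusion in the commutative special case.
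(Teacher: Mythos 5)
Your reduction to \Cref{cute} is exactly how the paper treats this statement (no separate proof is given there), so the ``depth at most $2$'' part of your argument is fine. The genuine problem is your exclusion of depth $1$. Both of the facts you invoke --- ``a depth-$1$ inclusion of $II_1$-factors forces $[M:N]=1$'' and ``a $II_1$-subfactor with $[M:N]>1$ automatically has depth at least $2$'' --- are false: the inclusion $\C\otimes N\subset M_n(\C)\otimes N$ has index $n^2>1$ and depth $1$, and the paper itself records it as such in \Cref{last-examples}(2). (Note that this example is even regular with \emph{simple} relative commutant, which is precisely why \Cref{cute} cannot be upgraded to ``depth exactly $2$'' in the simple case and why the corollary is restricted to the commutative one.) Your ``alternative'' argument does not repair this either: re-verifying that $N'\cap M\subset N'\cap M_1\subset N'\cap M_2$ is a basic construction only re-proves depth $\le 2$; to conclude the depth is exactly $2$ under the paper's definition you must show that $\C\subset N'\cap M\subset N'\cap M_1$ is \emph{not} a basic construction, which is a separate assertion.

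The commutativity hypothesis is what rules out depth $1$, and you never use it at this step. If $\C\subset N'\cap M\subset N'\cap M_1$ were a basic construction, then $[M:N]=\|\Lambda\|^2=\dim_{\C}(N'\cap M)$, where $\Lambda$ is the inclusion matrix of $\C\subset N'\cap M$ (see \cite[Theorem 4.6.3]{GHJ}, used the same way in the paper's prime-index proposition). By the index formula \Cref{dimn-formula} this forces the generalized Weyl group to be trivial, i.e., $\mathcal{N}_M(N)=\mathcal{U}(N)\,\mathcal{U}(N'\cap M)$, and regularity then gives $M=\mathcal{N}_M(N)''=N\vee(N'\cap M)$. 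But when $N'\cap M$ is commutative each of its minimal projections commutes with both $N$ and $N'\cap M$, hence is central in $N\vee(N'\cap M)=M$; factoriality of $M$ then forces $N'\cap M=\C$ and $[M:N]=1$, i.e., $N=M$. So for a proper inclusion the depth is exactly $2$. (Alternatively one may read ``depth $2$'' as ``depth at most $2$,'' as is standard in the Nikshych--Vainerman framework this corollary feeds into; but if you raise the depth-$1$ issue, you must dispose of it by a correct argument such as the above.)
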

Few remarks are in order which tell that the converse of the above
result need  not be true.
\begin{remark}
  \begin{enumerate}
  \item
    A depth $2$ subfactor having commutative first relative commutant need
    not be regular. For example, consider a finite dimensional Hopf
    $C^*$-algebra (that is, a Kac algebra) $K$, which is not a group
    algebra, acting minimally on a type $II_1$ factor $N$. Then,
    $N\subset N\rtimes K$ is a depth 2 subfactor. Being irreducible,
    this subfactor is not regular.
 \smallskip
 
 \item Notice that a depth 2 regular subfactor $N\subset M$ may have a
   non-commutative first relative commutant.  As an example, one may
 look at  the subfactor illustrated in \Cref{gap}.

\end{enumerate}
\end{remark} 
\begin{theorem}\label{mainresult}
Let $N\subset M$ be a finite
  index regular inclusion of $II_1$-factors with commutative relative
  commutant $N^{\prime}\cap M$. Then, there exists a biconnected weak
  Kac algebra $K$ and  a minimal action  of $K$ on $N$ such that
  $N\subset M$ is isomorphic to $N\subset N\rtimes K$.
\end{theorem}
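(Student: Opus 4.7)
The plan is to combine \Cref{abelianrelativecommutant} with the Nikshych--Vainerman characterization of depth $2$ inclusions recalled in the preliminaries, and then upgrade the resulting weak Hopf $C^*$-algebra to a weak Kac algebra by exploiting commutativity of $N' \cap M$ together with regularity of the inclusion.

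First, \Cref{abelianrelativecommutant} immediately gives that $N \subset M$ is of depth $2$. Since the Nikshych--Vainerman theorem in the form cited in the preliminaries produces a weak Hopf $C^*$-algebra $H$ acting on $M_1$ with $M_2 \cong M_1 \rtimes H$ and $M = M_1^H$, in order to realize $N \subset M$ itself as a crossed product I would first fix a $2$-step tunnel $N_{-2} \subset N_{-1} \subset N \subset M$ for $N \subset M$. By \Cref{downward depth}, the inclusion $N_{-2} \subset N_{-1}$ is again of depth $2$, so applying Nikshych--Vainerman to $N_{-2} \subset N_{-1}$ yields a biconnected weak Hopf $C^*$-algebra $K$ and a minimal action of $K$ on $N = (N_{-1})_1$ such that $M = (N_{-1})_2 \cong N \rtimes K$, with $N_{-1} = N^K$. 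This already provides the structural isomorphism asserted in the theorem, except for the refinement that $K$ is a weak Kac algebra.

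The main obstacle, and the crux of the argument, is therefore the step of showing $S^2 = \mathrm{Id}_K$ (together with $*$-preservation of $S$). Here I would use that minimality of the action of $K$ on $N$ identifies $K_s$ with $N' \cap M$; hence commutativity of $N' \cap M$ forces $K_s$ to be commutative, and via the canonical $*$-antiisomorphism $S \colon K_s \to K_t$ the target counital subalgebra $K_t$ is commutative as well. Combined with the fact that a finite index regular subfactor of $II_1$-factors is automatically extremal (which can be extracted from the index formula $[M:N] = |G|\, \mathrm{dim}(N' \cap M)$ of \cite{BG}, or equivalently from \Cref{scalar-index} on the Watatani index), I expect that one can then invoke the standard criterion from the theory of weak Hopf $C^*$-algebras (see \cite{NV1, NV2, BNS}) asserting that a biconnected weak Hopf $C^*$-algebra producing an extremal $II_1$ subfactor with commutative counital subalgebras is necessarily a weak Kac algebra.

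In summary, the outline is: depth $2$ via \Cref{abelianrelativecommutant}; tunnel plus Nikshych--Vainerman to obtain $(K, \triangleright)$ with $M \cong N \rtimes K$; commutativity of the Cartan subalgebras of $K$ coming from commutativity of $N' \cap M$ under minimality; and finally an appeal to an existing weak Kac criterion using extremality of the inclusion. Biconnectedness is part of the Nikshych--Vainerman conclusion and needs no extra work. The single delicate point on which the argument hinges is the last one --- ensuring that the additional hypothesis $(N' \cap M)$ commutative is sharp enough to force $S^2 = \mathrm{Id}$, rather than merely producing a weak Hopf $C^*$-algebra.
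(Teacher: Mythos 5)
Your overall skeleton (depth $2$ via \Cref{abelianrelativecommutant}, a $2$-step tunnel, and Nikshych--Vainerman applied to $N_{-2}\subset N_{-1}$ to obtain $K=N_{-1}'\cap M$ acting minimally on $N$ with $M\cong N\rtimes K$) coincides with the paper's. The gap sits exactly at the point you yourself flag as delicate: the upgrade from weak Hopf $C^*$-algebra to weak Kac algebra. There is no ``standard criterion'' of the form you invoke --- that biconnectedness, extremality and commutativity of the Cartan subalgebras force $S^2=\mathrm{Id}$ --- and you give no argument for it. The criterion actually available in \cite{NV1} (Corollary 4.7 and Theorem 4.17) is that $K$ is a weak Kac algebra if and only if the Watatani index of ${\tr_N}_{|_{N_{-1}'\cap N}}$ is a scalar, i.e.\ the trace that the ambient factor induces on the Cartan subalgebra is the Markov trace for $\C\subset N_{-1}'\cap N$. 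Commutativity of that subalgebra by itself says nothing about which trace it carries, so your step does not follow.

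The real work, which your proposal omits, is verifying this scalar-index condition, and this is precisely where regularity enters in an essential way: \Cref{scalar-index} (proved in \cite{BG} using two-sided bases for $N\subset N\vee(N'\cap M)$ and $N\vee(N'\cap M)\subset M$) gives that $\mathrm{Ind}({\tr_M}_{|_{N'\cap M}})$ is a scalar, and one must then transport this from $N'\cap M$ to $N_{-1}'\cap N$. The paper does so by observing that $x\mapsto JxJ$ is an anti-isomorphism of $N_{-1}'$ onto $M$ carrying $N_{-1}'\cap N$ onto $N'\cap M$ and intertwining the traces, and then using extremality of $N_{-1}\subset N$ (which holds because it has depth $2$, not because of regularity) to identify ${\tr_N}_{|_{N_{-1}'\cap N}}$ with ${\tr_{N_{-1}'}}_{|_{N_{-1}'\cap N}}$, so that the two trace vectors, and hence the two Watatani indices, coincide. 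Your proposal uses extremality only as an auxiliary hypothesis for the unproved criterion and never connects the regularity hypothesis to the Markov-trace condition that \cite{NV1} actually requires; as written, the argument does not close.
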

\begin{proof}
 By \Cref{cute} and \Cref{abelianrelativecommutant}, we observe that
 $N \subset M$ has depth $2$.  Choose a $2$-step downward basic
 construction $N_{-2}\subset N_{-1}\subset N\subset M$.  Then,
 $N_{-2}\subset N_{-1}$ is also of depth two - see
 \Cref{dual-depth-2}. Let $K:=N_{-1}'\cap M$. From \cite{NV1}, it will
 follow that $K$ admits a biconnected weak Kac algebra structure (with
 an appropriate action on $N$) provided the Watatani index of
 ${\tr_N}_{|_{N_{-1}'\cap N}}$ is a scalar.

By \Cref{scalar-index}, we know that
$\text{Ind}({\tr_M}_{|_{N^{\prime}\cap M}})$ is a scalar.  Let $J:
L^2(N) \rar L^2(N)$ denote the modular conjugation operator. Since
$N_{-1} \subset N \subset M$ is an instance of basic construction, the
map $B(L^2(N)) \ni x \mapsto JxJ \in B(L^2(N))$ is an anti-isomorphism
that maps $N_{-1}'$ onto $M$ and $\tr_M = \tr_{N_{-1}'}\circ [J(\cdot)
  J]$; so that \( {\tr_M}_{|_{N'\cap M}} = {\big({\tr_{N_{-1}'}}\circ
  [J (\cdot) J]\big)}_{|_{N'\cap M}}\). Also, $ 
J(N'\cap M) = N_{-1}'\cap N$; so that, $N_{-1}'\cap N$ is commutative and $(\C \subset N'\cap
M) \cong (\C \subset N_{-1}'\cap N)$. Further, since $N_{-1}\subset N$
is extremal (being of depth $2$), we have
\[
{\tr_N}_{|_{N_{-1}'\cap N }} = {\tr_{N_{-1}'}}_{|_{N_{-1}'\cap N }}.
\]
Thus, ${\tr_M}_{|_{N'\cap M}}$ and ${\tr_N}_{|_{N_{-1}'\cap N }}$ have
same trace vectors and hence
\[
 \mathrm{Ind}({\tr_N}_{|_{N_{-1}'\cap N}}) =  \text{Ind}({\tr_M}_{|_{N^{\prime}\cap M}}) ,
\]
which is a scalar. Thus, by \cite[Corollary 4.7 and Theorem
  4.17]{NV1}, $K$ admits a weak Kac algebra structure, which is also
biconnected, by \cite[Remark 5.8 (ii)]{NV1}. Further, by
\cite[Propositions 6.1 and 6.3, and Remark 6.4 (i)]{NV1}, $K$ acts
minimally on $N$ such that $N \subset M$ is isomorphic to $N \subset N
\rtimes K$. This completes the proof. 
\end{proof}
We end our discussion with a few well-known classes of reducible
regular subfactors.
 \begin{example}\label{last-examples}
  \begin{enumerate}
   \item If a finite group $G$ acts innerly on a $II_1$ factor $N$ in
     such a way that $M= N\rtimes G$ is a $II_1$ factor, then the
     inclusion $N\subset M$ is regular and $N^{\prime}\cap M$ is
     non-trivial. \footnote{
     \url{https://mathoverflow.net/questions/364547/action-of-a-finite-group-on-a-finite-factor}}

\item Suppose $N$ is a type $II_1$ factor. Then, the depth $1$
  subfactor $\C\otimes N\subset M_n(\C)\otimes N$ is an example of a
  regular subfactor with simple first relative commutant ($\cong
  M_n(\C)$).
\item Let $P$ be a $II_1$-factor and $\alpha \in \Aut(P)$. Consider
  the diagonal inclusion
  \[
  N:=\left\{\begin{pmatrix}
x & 0 \\ 0 & \alpha(x) 
  \end{pmatrix} : x \in P\right\} \subset M: = P \otimes M_2(\C),
  \]
  which is well known to be a subfactor of type $II_1$ with $[M:N]=
  4$. If $\alpha$ is an outer automorphism, then it is well known and can be easily seen that 
  \[
  N'\cap M =\{\mathrm{diag}(\lambda,\mu) :
  \lambda, \mu \in \C \}.
  \]
  Next, recall the Connes' outer conjugacy invariants $p_0(\alpha)\in
  \N$ and $\gamma (\alpha)\in \C$ given by $\{ n \in \Z: \alpha^n \in
  \mathrm{Inn}(P)\} = p_0(\alpha) \Z$ and $\alpha (u) = \gamma
  (\alpha) u$ for some $u \in \mathcal{U}(P)$ with
  $\alpha^{p_0(\alpha)}=\mathrm{Ad}_u$. Note that, if $p_0(\alpha) =
  2$ (in particular, $\alpha$ is outer) and $\gamma(\alpha) \neq 1$,
  then it is known that $N \subset M$ has depth $2$ - see \cite[$\S
    7$]{NV1}. We show that $N \subset M$ is regular as well, i.e.,
  ${\mathcal{N}_M(N)}^{\dprime}=M$.
  
Let $Q:= {\mathcal{N}_M(N)}^{\dprime}$ and fix a $1\neq u\in
\mathcal{U}(P)$ such that ${\alpha}^2=\mathrm{Ad}_u$.  Since
   \[
\{\mathrm{diag}(1,c): c \in \mathbb{T}\setminus \{1\} \} \subseteq
                {\mathcal{N}}_M(N)\setminus N,
  \]
  it is clear that $Q\neq N$. In view of the fact that $N\subset M$ is
  a maximal subfactor (see, for instance,
  \cite[Theorem 5.4]{TW}), it is sufficient to show that $Q$ is a
  factor. To this end, we show that $Q^{\prime}\cap Q\neq
  \{\mathrm{diag}(\lambda, \mu) : \lambda, \mu \in \C\}$; this will
  then prove that $Q$ is a factor as $Q^{\prime}\cap Q\subseteq
  N^{\prime}\cap M \cong \C\oplus \C$.

  Suppose, on the contrary, that $Q^{\prime}\cap Q =
  \{\mathrm{diag}(\lambda , \mu) : \lambda, \mu \in \C\}$. Then, we
  see that the projection $p:= \begin{pmatrix} 1& 0 \\ 0 & 0
  \end{pmatrix}$
  belongs (to $Q'\cap Q$ and hence) to $Q$; thus,
  \[
  \{\mathrm{diag}(x,y) : x,y \in P\}= pMp\oplus
  (1-p)M(1-p)=pNp\oplus (1-p)N(1-p)\subseteq Q.
  \]
We assert that the diagonal subalgebra $  D := \{\mathrm{diag}(x,y) : x,y \in P\}$ is a maximal von Neumann
subalgebra of $M$, i.e., $\{ D, a\}''=M$ for any $a \in M \setminus
D$. Assuming this assertion, if we consider $x:= \begin{pmatrix} 0 & 1 \\ u & 0
  \end{pmatrix}\in M$, then  $x\notin D$ and  a simple calculation
shows that $x \in \mathcal{N}_M(N)\subset Q$; so, by the maximality of
$D$ in $M$, it follows that $Q = M$ (a factor), which contradicts the
assumption that $Q'\cap Q \cong \C \oplus \C$. Hence, it just remains
to prove the maximality of $D$ in $M$. Let $y \in M\setminus
D$. Without loss of generality, we can assume that $y=\begin{pmatrix}
0 & w \\ z & 0 \end{pmatrix} $ with $(w,z) \neq (0,0)$. Futher, we can
assume that $w \neq 0$. Since $P$ is a $II_1$-factor, it is
algebraically simple; so, we have $PwP =P$. Thus, for each $0 \neq a
\in P$, we have $a= \sum_{i} x_i w y_i$ for a finite collection
$\{x_i, y_i: 1 \leq i \leq n\}$ in $P$. Thus,
\[
\begin{pmatrix} 0 & a \\ 0 & 0 \end{pmatrix} = \sum_i \begin{pmatrix} x_i & 0 \\ 0 & 0 \end{pmatrix}
\begin{pmatrix} 0 & w \\ z & 0 \end{pmatrix} \begin{pmatrix} 0 & 0 \\ 0 & y_i \end{pmatrix}\in \{D, y\}''.
\]
Likewise, $\begin{pmatrix} 0 & 0 \\ a & 0 \end{pmatrix} \in \{D,
y\}''$. Hence, $\begin{pmatrix} 0 & a \\ b & 0 \end{pmatrix} \in \{D,
y\}''$ for all $a, b \in P$, which then implies that $\{D, y\}''= M$,
i.e., $D$ is maximal in $M$.  \color{black}
  \end{enumerate}

 \end{example}
 
\color{black}

\subsection*{Acknowledgements}
The authors would like to thank Yongle Jiang for bringing
\Cref{last-examples}(3) to our notice; and, Vijay Kodiyalam, Sebastien
Palcoux and Leonid Vainerman for many fruitful exchanges. The authors
would also like to mark their note of appreciation to
T.~Ceccherini-Silberstein for acknowledging our concerns related to
the oversights in some of his proofs (from \cite{S}).

\end{document}